\begin{document}

\newcommand{\Da}[1]{\textcolor{red}{(Da: #1)}}
\newcommand{\Gi}[1]{\textcolor{Green}{(Gi: #1)}}
\newcommand{\Ge}[1]{\textcolor{blue}{(Ge: #1)}}

\tikzstyle{nodal}=[circle,fill=black,inner sep=0pt, minimum width=4.5pt]
\tikzstyle{half-pencil}=[rectangle,draw=black,thick,inner sep=0pt, minimum width=5pt, minimum height=5pt]
\tikzset{double distance = 2pt}

\author{Gebhard Martin}
\address{Mathematisches Institut \\ Universität Bonn \\ Endenicher Allee 60 \\ 53115 Bonn \\ Germany}
\email{gmartin@math.uni-bonn.de} 

\author{Giacomo Mezzedimi}
\address{Institut für Algebraische Geometrie \\ Leibniz Universität Hannover \\ Welfengarten 1 \\ 30167 Hannover \\ Germany}
\email{mezzedimi@math.uni-hannover.de}

\author{Davide Cesare Veniani}
\address{Institut für Diskrete Strukturen und Symbolisches Rechnen \\ Universität Stuttgart \\ Pfaffenwaldring 57 \\ 70569 Stuttgart \\ Germany}
\email{davide.veniani@mathematik.uni-stuttgart.de}

\title{On extra-special Enriques surfaces}

\date{\today}
\subjclass[2020]{14J28 (14C20)}
\keywords{Enriques surface, genus one fibration, isotropic sequence}
\begin{abstract}
We refine Cossec and Dolgachev's classification of extra-special Enriques surfaces, providing a complete and concise proof.
\end{abstract}

\maketitle

\section{Introduction}
Throughout the paper, \(X\) will denote an Enriques surface defined over an algebraically closed field of arbitrary characteristic~\(p\).

A \emph{half-fiber} on~\(X\) is a divisor \(F\) such that \(2F\) is a fiber of a genus one fibration.
A \emph{\(c\)-sequence} on~\(X\) is a sequence of half-fibers \((F_1,\hdots,F_c)\) such that \(F_i.F_j = 1 - \delta_{ij}\).
As observed by Enriques himself for classical Enriques surfaces, and later extended to non-classical Enriques surfaces by Bombieri and Mumford \cite[Theorem~3]{Bombieri.Mumford:III}, every Enriques surface admits a \(1\)-sequence. The maximal length of a \(c\)-sequence is \(10\).

It is a natural question to ask whether every \(c\)-sequence can be extended to a \(c'\)-sequence with~\(c' > c\).
In this context, \emph{extra-special} Enriques surfaces play a fundamental role.

\begin{definition} \label{def: extra-special}
An Enriques surface \(X\) is called 
\begin{itemize}
    \item \emph{extra-special of type~\(\tilde{E}_8\)} if the dual graph of all \((-2)\)-curves on~\(X\) is
\[
\begin{tikzpicture}
    \node (R4) at (0,0) [nodal] {};
    \node (R5) at (0,1) [nodal] {};
    \node (R6) at (1,0) [nodal] {};
    \node (R7) at (2,0) [nodal] {};
    \node (R8) at (3,0) [nodal] {};
    \node (R9) at (4,0) [nodal] {};
    \node (RX) at (5,0) [nodal] {};
    \node (R3) at (-1,0) [nodal] {};
    \node (R2) at (-2,0) [nodal] {};
    \node (R11) at (6,0) [nodal] {};
    \draw (R2)--(R3)--(R4) (R5)--(R4)--(RX) (RX)--(R11);
\end{tikzpicture}
\]
\item  \emph{extra-special of type~\(\tilde{D}_8\)} if the dual graph of all \((-2)\)-curves on~\(X\) is
\[
\begin{tikzpicture}
    \node (R4) at (0,0) [nodal] {};
    \node (R5) at (0,1) [nodal] {};
    \node (R6) at (1,0) [nodal] {};
    \node (R7) at (2,0) [nodal] {};
    \node (R8) at (3,0) [nodal] {};
    \node (R9) at (4,0) [nodal] {};
    \node (RX) at (5,0) [nodal] {};
    \node (R3) at (-1,0) [nodal] {};
    \node (R2) at (-2,0) [nodal] {};
    \node (R1) at (4,1) [nodal] {};
\draw (R2)--(R3)--(R4) (R5)--(R4)--(RX) (R1)--(R9);
\end{tikzpicture}
\]
\item \emph{extra-special of type~\(\tilde{E}_7\)} if the dual graph of all \((-2)\)-curves on~\(X\) is
\[
\begin{tikzpicture}
    \node (R4) at (0,0) [nodal] {};
    \node (R5) at (0,1) [nodal] {};
    \node (R6) at (1,0) [nodal] {};
    \node (R7) at (2,0) [nodal] {};
    \node (R8) at (3,0) [nodal] {};
    \node (R9) at (4,0) [nodal] {};
    \node (RX) at (5,0) [nodal] {};
    \node (R3) at (-1,0) [nodal] {};
    \node (R2) at (-2,0) [nodal] {};
    \node (R1) at (-3,0) [nodal] {};
    \node (R11) at (6,0) [nodal] {};
    \draw (R1)--(R2)--(R3)--(R4) (R5)--(R4)--(RX);
    \draw[double] (RX)--(R11);
\end{tikzpicture}
\]
\end{itemize}
An Enriques surface is called \emph{extra-special} if it is extra-special of type~\(\tilde{E}_8\), \(\tilde{D}_8\), or \(\tilde{E}_7\).
\end{definition}

The classification of Enriques surfaces on which every \(1\)-sequence can be extended to a \(2\)-sequence is well-known and is contained in the following theorem due to Cossec and Dolgachev.

\begin{theorem}[{\cite[Theorem~3.4.1]{Cossec.Dolgachev}} or {\cite[Theorem~6.1.10]{CossecDolgachevLiedtke}}] \label{thm: non-degeneracy1}
If \(X\) is not extra-special of type~\(\tilde{E}_8\), then every \(1\)-sequence on~\(X\) extends to a \(2\)-sequence.
\end{theorem}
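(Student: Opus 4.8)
The plan is to pass to the numerical lattice \(\mathrm{Num}(X)\cong E_{10}=U\oplus E_8\) and to work with the class \(f_1=[F_1]\), which is primitive, nef and isotropic. Since a primitive nef isotropic class is always represented by a half-fiber, extending \((F_1)\) to a \(2\)-sequence is equivalent to producing a \emph{nef} isotropic class \(f_2\) with \(f_1\cdot f_2=1\). Such a class always exists \emph{numerically}: because \(f_1\) is primitive and \(E_{10}\) is unimodular, \(f_1\) is part of a hyperbolic basis, so there is \(f_2^0\) with \((f_2^0)^2=0\) and \(f_1\cdot f_2^0=1\); after replacing \(f_2^0\) by \(-f_2^0\) if necessary and using Riemann--Roch on the Enriques surface, I may assume it is effective. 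The whole difficulty is therefore to make such a class \emph{nef}.

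First I would fix an ample class \(A\) and choose, among all effective classes \(f_2\) with \(f_2^2=0\) and \(f_1\cdot f_2=1\), one of minimal degree \(f_2\cdot A\). If \(f_2\) is not nef there is a \((-2)\)-curve \(\Theta\) with \(f_2\cdot\Theta<0\); since \(f_1\) is nef, \(f_1\cdot\Theta\ge 0\). If \(f_1\cdot\Theta=0\), then \(\Theta\) is a fiber component of \(|2f_1|\) and the reflection \(s_\Theta\) fixes \(f_1\); hence \(s_\Theta(f_2)\) is again isotropic with \(f_1\cdot s_\Theta(f_2)=1\), lies in the closure of the positive cone, is effective by Riemann--Roch, and has strictly smaller \(A\)-degree, contradicting minimality. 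Thus every obstructing curve satisfies \(f_1\cdot\Theta\ge 1\). Projecting \(\Theta\) onto the hyperbolic plane \(\langle f_1,f_2\rangle\) and using that its orthogonal complement is negative definite, the equality \(\Theta^2=-2\) forces \((f_1\cdot\Theta,\,f_2\cdot\Theta)=(1,-1)\) and \(\Theta=f_2-f_1\). In other words, at the minimum either \(f_2\) is nef --- and we are done --- or \(f_2=f_1+\Theta\) for a \emph{unique} \((-2)\)-curve \(\Theta\) with \(f_1\cdot\Theta=1\), i.e. a bisection of the fibration \(|2f_1|\).

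It remains to analyze the obstructed case under the assumption that \((F_1)\) does \emph{not} extend, i.e. that \emph{no} valid class is nef. Parametrizing the valid classes as \(f_2(v)=f+v-\tfrac{v^2}{2}f_1\) with \(f=f_1+\Theta\) and \(v\) ranging over \(f_1^\perp/f_1\cong E_8\), a direct computation gives \(f_2(v)\cdot\Theta=-\tfrac{v^2}{2}-1\), so \(\Theta\) obstructs only \(v=0\); moreover the intersections of \(f_2(v)\) with the fiber components of \(|2f_1|\) are controlled by the Weyl group of the root lattice \(\mathsf R\subseteq E_8\) they generate, so after replacing \(v\) by a dominant representative these become non-negative as well. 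Hence for every \(v\ne 0\) the only possible obstruction is a further multisection \((-2)\)-curve. The heart of the proof is to show that the non-existence of a nef valid class forces \(\mathsf R=E_8\) --- equivalently, that \(|2f_1|\) has a fiber of type \(\tilde E_8\) --- and that \(\Theta\) is the unique bisection: once the components of the \(\tilde E_8\)-fiber fill up all of \(E_8\) there is no vertical correction left to clear the obstruction, while any additional independent section would instead produce a nef valid class. Together with \(\Theta\) these curves then exhaust the \((-2)\)-curves and assemble into the \(T_{2,3,7}\)-graph of Definition~\ref{def: extra-special}, so \(X\) is extra-special of type \(\tilde E_8\).

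The main obstacle is exactly this last step: translating ``no vertical reflection and no extra section clears the obstruction'' into the rigid statement that the reducible fibers fill all of \(E_8\) and that the dual graph is precisely \(T_{2,3,7}\). I expect to handle it by a finite case analysis over the root sublattices \(\mathsf R\subseteq E_8\) realizable by fibers of a genus one fibration (bounded in rank by \(8\)), showing that whenever \(\mathsf R\ne E_8\) a root \(v\in E_8\setminus\mathsf R\) yields a nef class \(f_2(v)\), and that a fiber of type \(\tilde E_8\) carrying a bisection but no second independent section leaves only the extra-special configuration. As a sanity check in the opposite direction, on an extra-special surface of type \(\tilde E_8\) the null vector of the \(\tilde E_8\)-subgraph is the \emph{unique} primitive nef isotropic class, so every nef isotropic class is a multiple of \(f_1\) and no extension exists --- confirming that the hypothesis of the theorem is sharp.
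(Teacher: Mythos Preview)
The paper does not give its own proof of this theorem; it is stated purely as a citation to Cossec--Dolgachev \cite[Theorem~3.4.1]{Cossec.Dolgachev} and \cite[Theorem~6.1.10]{CossecDolgachevLiedtke}, and used as background input for the paper's main results on \(2\)-sequences. So there is no in-paper argument to compare your proposal against.

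That said, your outline is essentially the classical reflection/minimization strategy of Cossec--Dolgachev: pick an isotropic \(f_2\) with \(f_1\cdot f_2=1\), minimize the ample degree, and analyze the obstructing \((-2)\)-curve. Your computation that at the minimum the obstructing curve must satisfy \(\Theta\equiv f_2-f_1\) is correct, and the parametrization \(f_2(v)\) and the identity \(f_2(v)\cdot\Theta=-1-\tfrac{v^2}{2}\) are fine.

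Where your sketch has a genuine gap is precisely the step you flag as the ``heart''. Two issues deserve attention. First, \(\mathsf R=E_8\) only tells you that \(|2F_1|\) has a fiber of type \(\II^*\); it does not by itself say this fiber is the \emph{half-fiber} \(F_1\). If the \(\II^*\)-fiber is simple, then \(\Theta\) meets it with multiplicity~\(2\), and (as the paper's own \autoref{thm: II^* fiber} shows) several non-\(\tilde E_8\) configurations arise---all of which \emph{do} admit a second half-fiber, so \((F_1)\) extends there. Your write-up jumps directly to the \(T_{2,3,7}\) graph without ruling these out. Second, even granting \(\mathsf R=E_8\), the claim that ``no vertical correction clears the obstruction'' is not automatic: for a root \(v\) one has \(f_2(v)\cdot\Theta=0\), and \(f_2(v)\cdot C=\Theta\cdot C+v\cdot C\) for each fiber component \(C\), so producing a nef \(f_2(v)\) amounts to finding a root \(v\) in a suitable shifted chamber---whether or not this is possible depends on exactly how \(\Theta\) meets the \(\II^*\)-fiber, which is the case split you still owe. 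These are fixable by the finite case analysis you propose, but they are the substance of the proof, not a routine tail.
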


The main result of this paper is the following theorem, whose proof is obtained by combining \autoref{thm: directproof} and \autoref{cor: II^* fiber.consequences}.

\begin{theorem}\label{thm: main}
If \(X\) is not extra-special, then every \(2\)-sequence on~\(X\) extends to a \(3\)-sequence.
\end{theorem}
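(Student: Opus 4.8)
The plan is to turn the extension problem into the search for a single root and then to read off the obstruction from the configuration of \((-2)\)-curves. Write \(U = \langle F_1, F_2 \rangle \subseteq \mathrm{Num}(X)\); since \(F_1^2 = F_2^2 = 0\) and \(F_1 \cdot F_2 = 1\), this is a hyperbolic plane, and its orthogonal complement \(E := U^\perp\) is a copy of the negative definite lattice \(E_8\), so that \(\mathrm{Num}(X) = U \oplus E\). For any \(r \in E\) with \(r^2 = -2\), the class \(F_3 := F_1 + F_2 + r\) satisfies \(F_3^2 = 0\) and \(F_1 \cdot F_3 = F_2 \cdot F_3 = 1\), and it is primitive because \(F_1 \cdot F_3 = 1\). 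Conversely, every half-fiber with these intersection numbers has exactly this shape, so the \(2\)-sequence extends if and only if such an \(F_3\) can be chosen nef: a primitive nef isotropic class on an Enriques surface is the class of a half-fiber. The whole problem thus becomes: \emph{find a root \(r \in E\) with \(F_1 + F_2 + r\) nef}.

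First I would dispose of the \emph{vertical} \((-2)\)-curves, namely those \(C\) with \(C \cdot F_1 = C \cdot F_2 = 0\), equivalently \(C \in E\). These are the effective roots of a root subsystem \(R \subseteq E_8\), and the reflections in them generate a group \(W_R\) fixing both \(F_1\) and \(F_2\). Replacing \(r\) by \(w(r)\) for a suitable \(w \in W_R\) preserves all the numerical properties above and lets me assume that \(r\) lies in the dominant chamber of \(R\), so that \(r \cdot C \geq 0\), hence \((F_1 + F_2 + r) \cdot C = r \cdot C \geq 0\), for every vertical \((-2)\)-curve \(C\). For a non-vertical \((-2)\)-curve one has \(m_C := C \cdot (F_1 + F_2) \geq 1\), so the remaining nef conditions read \(r \cdot C \geq -m_C\) and can only fail when \(m_C\) is small. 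It therefore remains to find a dominant root \(r\) satisfying these finitely many inequalities.

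To locate the failures I would pass to the genus one fibration \(|2F_1|\), in which \(F_2\) is a bisection. The components of the reducible fibers lie in \(F_1^\perp\); those also orthogonal to \(F_2\) are precisely the vertical curves generating \(R\), while the few components actually met by \(F_2\) are the non-vertical curves with small \(m_C\). The room for a good dominant root shrinks as \(R\) grows, and \(R\) is forced to be large exactly when a single reducible fiber of \(|2F_1|\) is large and the bisection \(F_2\) meets it minimally. Running through the relevant fiber types — \(II^*\), \(I_4^*\) and \(III^*\), with \(R\) of type \(E_8\), \(D_8\) and \(E_7\) — together with the position of \(F_2\), I expect to find in each subcase either a dominant root satisfying every inequality, or a rigid configuration in which the complete list of \((-2)\)-curves is exactly one of the three diagrams of \autoref{def: extra-special}.

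The main obstacle is the type \(II^*\) case, where \(R = E_8\). Because \(E_8\) is simply-laced, its dominant chamber contains a unique root, the highest root \(\theta\); there is then no freedom left, and the single inequality \(\theta \cdot C \geq -m_C\) for the one fiber component \(C\) met by \(F_2\) must be checked directly. Its possible failure is precisely what isolates the extra-special surfaces of type \(\tilde{E}_8\), and it is the same \(II^*\) configuration that already obstructs the extension of \(1\)-sequences in \autoref{thm: non-degeneracy1}. This is why I would isolate the \(II^*\) analysis in a separate statement rather than fold it into the generic argument, while the remaining configurations — where either \(R\) has lower rank or decomposes, leaving genuine freedom among the dominant roots — should yield to a uniform, direct treatment.
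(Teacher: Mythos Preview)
Your strategy is genuinely different from the paper's. The paper never parametrizes the candidate \(F_3\) by roots of \(E_8\) or uses dominant chambers. Instead, it first shows (\autoref{thm: directproof}) that if a \(2\)-sequence \((F_1,F_2)\) does not extend, then one of \(|2F_i|\) is \emph{special} with a fiber of type \(\II^*\); the proof goes through Cossec's \(10\)-sequence machinery (\autoref{thm: 10sequence}) and a geometric observation about common components of simple fibers (\autoref{prop: geometricargument}). It then classifies \emph{all} Enriques surfaces carrying a special \(\II^*\)-fibration (\autoref{thm: II^* fiber}): besides the three extra-special types, one also finds the types~\(\I\) and~\(\tilde{E}_7^{(2)}\), and a direct inspection of the (finitely many) genus one pencils on those two surfaces shows that every \(2\)-sequence extends. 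So the paper replaces your root-by-root search by a reduction to a short list of surfaces with finite \((-2)\)-graph, where the check is finite. What your approach would buy, if completed, is a self-contained lattice argument that avoids both the \(10\)-sequence theorem and the Vinberg-type classification.

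As written, however, your proposal is a plan rather than a proof, and the plan has two concrete gaps. First, the case analysis is the entire content and is not carried out: the phrases ``I expect to find'' and ``should yield'' stand in for the actual verification that a suitable dominant root exists whenever \(R\subsetneq E_8\), and that for \(R\in\{E_8,D_8,E_7\}\) failure forces precisely one of the three extra-special graphs. In particular, the surfaces of type~\(\I\) and~\(\tilde{E}_7^{(2)}\) also admit \(2\)-sequences with \(R=E_8\) (they carry special \(\II^*\)-fibrations), so in the \(R=E_8\) branch your unique dominant root \(\theta\) must be shown to work on those surfaces while failing on the \(\tilde{E}_8\) surface; this is exactly the delicate point and you do not address it. Second, your identification of the non-vertical \((-2)\)-curves with ``the few components actually met by \(F_2\)'' in \(|2F_1|\) is incomplete: curves \(C\) with \(C\cdot F_1=1\), \(C\cdot F_2=0\) (for instance the component of \(F_2\) meeting \(F_1\), or a special bisection of \(|2F_1|\)) also have \(m_C=1\) and give obstructions of the form \(C\equiv F_2-r\). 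These are invisible from the \(|2F_1|\)-picture alone, so the analysis must be run symmetrically in \(F_1\) and \(F_2\). Finally, concluding that a found configuration is ``the complete list of \((-2)\)-curves'' requires an argument (Vinberg's criterion, as in the paper) that no further \((-2)\)-curves exist; this step is not mentioned.
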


\begin{remark}
In their first book about Enriques surfaces, Cossec and Dolgachev proved that \(X\) admits a \(3\)-sequence if \(X\) is not extra-special \cite[Theorem~3.5.1]{Cossec.Dolgachev}. 
In a similar vein, Cossec proved that if \(p \neq 2\), then every \(1\)-sequence on \(X\) extends to a \(3\)-sequence \cite[Theorem~3.5]{Cossec:Picard_group}. Note that our result is strictly stronger than both of these results, since it asserts the extendability of any given \(2\)-sequence in all characteristics. Big parts of the proof of the result of Cossec are left to the `patient reader'. Moreover, the proof of the result of Cossec--Dolgachev occupies 32 pages and is described as `lengthy' in the new book on Enriques surfaces, with `no guarantee that it is correct' (see \cite[Remark 6.1.14 and Theorem~6.2.6]{DolgachevKondoBook}).
The main ingredient that allows us to give a substantially shorter proof of a stronger result is the observation that in a non-extendable \(2\)-sequence one of the genus one fibrations is special with a fiber of type~\(\II^*\) (\autoref{thm: directproof}), and that Enriques surfaces with such a fibration can be easily classified in all characteristics (\autoref{thm: II^* fiber}). 
\end{remark}

Extra-special Enriques surfaces do exist. Salomonsson~\cite{Salomonsson} gave equations for all extra-special surfaces of type~\(\tilde{E}_8\) and \(\tilde{E}_7\). An alternative construction of these surfaces as well as examples of extra-special surfaces of type~\(\tilde{D}_8\) were given by Katsura, Kond\={o} and Martin \cite[§§10, 11, 12]{Katsura.Kondo.Martin}.

There do exist non-extendable \(1\) or \(2\)-sequences on extra-special Enriques surfaces \cite[Proposition~6.2.7]{DolgachevKondoBook}. More precisely, on extra-special surfaces of type~\(\tilde{E}_8\) there exists only one genus one fibration, so the only \(1\)-sequence is non-extendable. On extra-special surfaces of type~\(\tilde{D}_8\) there are exactly three genus one fibrations, forming two distinct \(2\)-sequences, both non-extendable. Finally, on extra-special surfaces of type~\(\tilde{E}_7\) there are exactly two genus one fibrations, forming a non-extendable \(2\)-sequence.

\begin{remark} \label{rmk: extra-special}
Every extra-special surface admits a genus one fibration with a double fiber of additive type, and an extra-special surface of type~\(\tilde{E}_7\) even admits a quasi-elliptic fibration with two double fibers \cite[Proposition~6.2.7]{DolgachevKondoBook}. Consequently (cf. \autoref{lem: genus.1.fibrations.on.Enriques}), extra-special Enriques surfaces can exist only in characteristic \(2\) and they are either classical or supersingular. Additionally, extra-special surfaces of type~\(\tilde{E}_7\) can only be classical. 
\end{remark}

We infer an immediate corollary from \autoref{rmk: extra-special}.

\begin{corollary}
If one of the following conditions holds:
\begin{enumerate}
    \item \(p \neq 2\),
    \item \(p = 2\) and \(X\) is ordinary,
    \item \(p = 2\) and \(X\) is not extra-special,
\end{enumerate}
then every \(c\)-sequence on~\(X\) with \(c \leq 2\) extends to a \(3\)-sequence.
\end{corollary}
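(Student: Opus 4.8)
The plan is to deduce the corollary directly from \autoref{thm: non-degeneracy1}, \autoref{thm: main}, and \autoref{rmk: extra-special}: the only thing to verify is that each of the three hypotheses forces \(X\) to be non-extra-special, after which the extension statement follows by simply chaining the two theorems.

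First I would check that \(X\) fails to be extra-special in all three cases. In case (1) this is immediate from \autoref{rmk: extra-special}, according to which extra-special Enriques surfaces can exist only in characteristic \(2\); hence \(p \neq 2\) rules them out. In case (2) I would again invoke \autoref{rmk: extra-special}, which asserts that every extra-special surface is classical or supersingular; since an ordinary Enriques surface is neither, it cannot be extra-special. Case (3) is the hypothesis itself. In particular, in each case \(X\) is a fortiori not extra-special of type~\(\tilde{E}_8\), as this is one of the three extra-special types, so the hypothesis of \autoref{thm: non-degeneracy1} is also met.

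It remains to carry out the chaining. Let \((F_1,\dots,F_c)\) be a \(c\)-sequence with \(c \leq 2\). If \(c \leq 1\), then, using that \(X\) always admits a \(1\)-sequence (Bombieri--Mumford) together with the fact that \(X\) is not extra-special of type~\(\tilde{E}_8\), \autoref{thm: non-degeneracy1} produces a \(2\)-sequence extending the given one; if \(c = 2\) such a sequence is already at hand. In every case, since \(X\) is not extra-special, \autoref{thm: main} extends the resulting \(2\)-sequence to a \(3\)-sequence, as desired.

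There is no genuine obstacle at the level of this corollary, the substantive work being concentrated in \autoref{thm: main} and \autoref{rmk: extra-special}. The only point requiring care is to use \emph{both} conclusions of \autoref{rmk: extra-special}: the characteristic restriction disposes of case (1), whereas the classical/supersingular dichotomy is precisely what is needed to exclude the ordinary surfaces of case (2), which are not covered by the characteristic restriction alone.
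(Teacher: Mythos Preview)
Your proposal is correct and matches the paper's approach: the paper presents this corollary as an immediate consequence of \autoref{rmk: extra-special} (combined with \autoref{thm: non-degeneracy1} and \autoref{thm: main}), and your argument spells out exactly this chain of implications.
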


Let us describe one of the geometric consequences of the extendability of \(1\)- and \(2\)-sequences to \(3\)-sequences. By applying \cite[Theorem 3.5.1 and the following discussion]{CossecDolgachevLiedtke} and \cite[Section~7.8.1]{Cossec:Projective_models} to a \(3\)-sequence that extends a given \(2\)-sequence \((F_1,F_2)\), we obtain the following strong version of the Enriques--Artin Theorem for non-extra-special, classical Enriques surfaces.

\begin{corollary}
If \(X\) is a classical Enriques surface which is not extra-special, then \(X\) is the minimal resolution of a sextic \(S \subseteq \mathbb{P}^3\) given by an equation of the form 
\[
x_1x_2x_3 L Q + x_1^2x_2^2 L^2 + x_1^2x_3^2 L^2 + x_2^2x_3^2 L^2 + x_1^2x_2^2x_3^2,
\]
where \(L\) is linear and \(Q\) is a quadratic form. 
Moreover, for each half-fiber \(F_1\) (resp. \(2\)-sequence \((F_1,F_2)\)) on \(X\), there is a sextic model as above such that \(F_1\) (resp. \((F_1,F_2)\)) maps to a line (resp. a pair of lines) in the non-normal locus of \(S\). 
\end{corollary}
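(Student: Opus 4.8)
The plan is to reduce the corollary to the classical projective geometry of a degree-six polarization, the only new input being the existence of a suitable \(3\)-sequence. Starting from a given half-fiber \(F_1\), I would first apply \autoref{thm: non-degeneracy1} to extend it to a \(2\)-sequence \((F_1,F_2)\), and then \autoref{thm: main} to extend this to a \(3\)-sequence \((F_1,F_2,F_3)\); starting instead from a given \(2\)-sequence \((F_1,F_2)\), I would invoke \autoref{thm: main} directly. In both cases the non-extra-special hypothesis is precisely what guarantees that the extension exists while retaining the prescribed members, which is the feature that upgrades the bare existence of a sextic model to the parametrized statement in the ``moreover'' clause.

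Next I would form the divisor \(D = F_1 + F_2 + F_3\). From the relations \(F_i.F_j = 1 - \delta_{ij}\) one computes \(D^2 = 6\), and since \(K_X\) is numerically trivial, Riemann--Roch on the Enriques surface gives \(\chi(D) = \chi(\mathcal{O}_X) + \tfrac{1}{2}D^2 = 4\). The non-degeneracy of the sequence is what ensures, after the reductions described in \cite[Theorem~3.5.1 and the following discussion]{CossecDolgachevLiedtke}, that \(|D|\) defines a birational morphism \(\varphi \colon X \to S \subseteq \mathbb{P}^3\) onto a sextic surface, contracting exactly the \((-2)\)-curves orthogonal to \(D\); hence \(X\) is the minimal resolution of \(S\). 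Feeding the \(3\)-sequence into \cite[Section~7.8.1]{Cossec:Projective_models} then places \(S\) in the tetrahedral normal form displayed in the statement, whose four faces are \(x_1 = 0\), \(x_2 = 0\), \(x_3 = 0\), and \(L = 0\), and along whose six edges one checks directly that each monomial forces vanishing to order two, so the edges constitute the non-normal locus of \(S\).

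For the ``moreover'' clause I would track the half-fibers through \(\varphi\). Since \(D.F_i = 2\), each \(F_i\) maps onto one of the tetrahedron's double edges, the factor two matching the multiplicity of \(S\) along that line; thus the prescribed \(F_1\) (respectively the prescribed pair \((F_1,F_2)\)) lands on a single line (respectively a pair of lines) in the non-normal locus, and the compatibility is exactly the content of having retained the prescribed members when extending the sequence. The genuine obstacle here is the extendability of a \emph{prescribed} \(2\)-sequence, already supplied by \autoref{thm: main}; the remaining projective steps are classical, with the only delicate point being the verification that the normal form of \cite[Section~7.8.1]{Cossec:Projective_models} survives for classical Enriques surfaces in characteristic \(2\), where the fourth coordinate is replaced by the general linear form \(L\).
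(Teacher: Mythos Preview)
Your proposal is correct and matches the paper's approach exactly: the paper does not give a separate proof of this corollary but states it as an immediate consequence of applying \cite[Theorem~3.5.1 and the following discussion]{CossecDolgachevLiedtke} and \cite[Section~7.8.1]{Cossec:Projective_models} to a \(3\)-sequence extending the given \(2\)-sequence, which is precisely your argument with the details filled in. Your additional computations (\(D^2 = 6\), \(\chi(D) = 4\), \(D.F_i = 2\)) and the discussion of how the half-fibers land on the edges are a helpful elaboration of what the paper leaves implicit.
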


Extending \(3\)-sequences to \(4\)-sequences is a much more difficult problem, even in characteristic \(p \neq 2\). The classification of non-extendable \(3\)-sequences will be the subject of a forthcoming paper.




\section{Preliminaries}

In \autoref{sec: genus.one.fibrations}, we collect some basic definitions and facts about genus one fibrations on Enriques surfaces. 
In \autoref{sec: isotropic.sequences}, we introduce the notion of isotropic sequences, generalizing the notion of \(c\)-sequence given in the introduction.

\subsection{Genus one fibrations} \label{sec: genus.one.fibrations}
Recall that an \emph{Enriques surface} is a smooth and proper surface \(X\)of Kodaira dimension \(0\) with \(b_2(X) = 10\) over an algebraically closed field of arbitrary characteristic~\(p\).
An Enriques surface \(X\) is called \emph{classical} if its canonical divisor is not linearly equivalent to \(0\). A non-classical Enriques surface \(X\) (which can only exist if \(p = 2\)) is called \emph{ordinary} (or \emph{singular} or a \emph{\(\mathbf\mu_2\)-surface}) if the absolute Frobenius morphism is bijective on \(H^1(X,\mathcal O_X)\), and \emph{supersingular} (or an \emph{\(\mathbf{\alpha}_2\)-surface}) otherwise.

A \emph{genus one fibration} on \(X\) is a fibration \(f \colon X \rightarrow \IP^1\) such that the generic fiber \(X_\eta\) is a regular genus one curve. A genus one fibration is called \emph{elliptic} if \(X_\eta\) is smooth and \emph{quasi-elliptic} otherwise.
We will use Kodaira's notation for singular fibers. Singular fibers of type~\(\I_n\) are called of \emph{multiplicative} type, while all others are called of \emph{additive} type.
Any genus one fibration on \(X\) admits at least one \emph{half-fiber}, that is a curve \(F\) of arithmetic genus one such that \(2F\) is a fiber of the fibration. In particular, the degree of a multisection of a genus one fibration \(f\) on \(X\) is divisible by \(2\) and if \(f\) admits a \((-2)\)-curve as a bisection, both \(f\) and the bisection are called \emph{special}. We call a divisor \emph{primitive} if its class in \(\Num(X)\) spans a primitive sublattice.

\begin{lemma}[{\cite[Corollary~2.2.9]{CossecDolgachevLiedtke}}] \label{lem: half-fiber}
An effective divisor \(D\) on \(X\) is a half-fiber of a genus one fibration on~\(X\) if and only if \(D\) is nef, primitive, and \(D^2 = 0\).
\end{lemma}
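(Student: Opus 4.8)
The plan is to prove the two implications separately, disposing of the forward direction quickly and concentrating the effort on the reverse one, which contains the real content. For the forward implication, suppose $D$ is a half-fiber, so that $F := 2D$ is a fiber of a genus one fibration $f\colon X \to \mathbb{P}^1$. Since all fibers of $f$ are numerically equivalent with self-intersection $0$, we get $F^2 = 0$ and hence $D^2 = 0$; and since $F.C \ge 0$ for every curve $C$, the divisor $D = \tfrac12 F$ is nef. The only genuinely structural input is primitivity, which I would deduce from the fact that the restriction $\mathcal{O}_D(D)$ is a nontrivial torsion line bundle (so that $D$ cannot be an even multiple of a smaller class), falling back on the Bombieri--Mumford description of double fibers on Enriques surfaces if needed; this is the one non-formal point in this direction.

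For the reverse implication, let $D$ be effective, nef and primitive with $D^2 = 0$. The first observation is that nefness together with $D^2 = 0$ forces $D.C = 0$ for every component $C$ of $D$, because the nonnegative numbers $D.C$ weight-sum to $D^2 = 0$. Thus $D$ is a connected divisor of canonical type, and since $K_X$ is numerically trivial, adjunction gives $2p_a(D) - 2 = D^2 + D.K_X = 0$, so $p_a(D) = 1$ and the dualizing sheaf satisfies $\omega_D \cong \mathcal{O}_D$. Combining $\omega_D \cong \mathcal{O}_D$ with the adjunction isomorphism $\omega_D \cong \mathcal{O}_D(D + K_X)$ yields $\mathcal{O}_D(D) \cong \mathcal{O}_D(-K_X)$, and therefore $\mathcal{O}_D(2D) \cong \mathcal{O}_D(-2K_X) \cong \mathcal{O}_D$ because $2K_X \sim 0$.

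The heart of the matter is then to show $h^0(\mathcal{O}_X(2D)) = 2$, so that $|2D|$ is a pencil. By Riemann--Roch $\chi(\mathcal{O}_X(2D)) = 1$, while $h^2(\mathcal{O}_X(2D)) = h^0(\mathcal{O}_X(K_X - 2D)) = 0$ since $2D$ is nef and nonzero. I would then feed the isomorphisms above into the two restriction sequences
\[
0 \to \mathcal{O}_X \to \mathcal{O}_X(D) \to \mathcal{O}_D(D) \to 0, \qquad 0 \to \mathcal{O}_X(D) \to \mathcal{O}_X(2D) \to \mathcal{O}_D(2D) \to 0.
\]
In the classical case, where $h^1(\mathcal{O}_X) = h^2(\mathcal{O}_X) = 0$ and $\mathcal{O}_D(D) \cong \mathcal{O}_D(K_X)$ is a nontrivial degree-$0$ line bundle on the genus-one curve $D$, the first sequence gives $h^0(\mathcal{O}_X(D)) = 1$ and $h^1(\mathcal{O}_X(D)) = 0$, and then the second, using $\mathcal{O}_D(2D) \cong \mathcal{O}_D$, gives $h^0(\mathcal{O}_X(2D)) = 2$. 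Since $(2D)^2 = 0$ and $2D$ is nef, the pencil $|2D|$ is base-point-free and defines a genus one fibration by \autoref{lem: half-fiber} (or the standard criterion), and the nontriviality of $\mathcal{O}_D(D)$ shows that $D$ itself is not a fiber, so that $2D$ is the fiber and $D$ is a half-fiber.

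The step I expect to be the main obstacle is the non-classical case in characteristic $2$: there $K_X \sim 0$ forces $\mathcal{O}_D(D) \cong \mathcal{O}_D$ to be \emph{trivial} and one has $h^1(\mathcal{O}_X) = h^2(\mathcal{O}_X) = 1$, so the triviality of $\mathcal{O}_D(D)$ no longer numerically distinguishes a half-fiber from a full fiber. Here one must instead exploit the Frobenius action on $H^1(X,\mathcal{O}_X)$, that is the ordinary/supersingular dichotomy, to see that $D$ still does not move in a pencil and that $2D$ rather than $D$ is the fiber class. Controlling this characteristic-$2$ subtlety, together with primitivity in the forward direction, are the two delicate points; everywhere else the argument is formal Riemann--Roch on $X$ and on the genus-one curve $D$.
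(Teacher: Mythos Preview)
The paper does not give its own proof of this lemma; it is simply quoted from \cite[Corollary~2.2.9]{CossecDolgachevLiedtke}, so there is nothing in the text to compare your argument against. Your outline is broadly the standard one used in that reference, and the reverse implication in the classical case is handled essentially correctly.

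There are, however, real gaps. Your argument for primitivity in the forward direction does not work: the (non\nobreakdash-)triviality of $\mathcal{O}_D(D)$ governs whether $|D|$ moves, not whether the numerical class of $D$ is divisible in $\Num(X)$, and in the non-classical case $\mathcal{O}_D(D)\cong\mathcal{O}_D$ is trivial anyway, so the criterion you propose says nothing. The clean way to obtain primitivity is to establish the reverse implication first and then argue by contradiction: if $F\equiv mE$ with $m\ge 2$ and $E$ primitive isotropic, Riemann--Roch together with \autoref{lem: reducibility} yields a nef effective representative of $E$, which by the reverse direction is itself a half-fiber of the same pencil, forcing $m=1$. In the reverse direction you also silently use that $D$ is connected; this does follow from primitivity (a splitting into two orthogonal isotropic pieces would make $D$ numerically divisible), but it should be stated. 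Your appeal to ``\autoref{lem: half-fiber}'' for the base-point-freeness of $|2D|$ is circular; you mean the elementary fact that a nef class of square $0$ with two independent sections on a surface with numerically trivial canonical class defines a genus one pencil. Finally, you are right that the non-classical case is the crux: there one genuinely needs the Bombieri--Mumford analysis of multiple fibers to force $h^0(\mathcal{O}_X(D))=1$, and your sketch does not yet supply that argument.
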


We recall here the behaviour of genus one fibrations on Enriques surfaces.

\begin{lemma}[{\cite[Theorem~4.10.3]{CossecDolgachevLiedtke}}] \label{lem: genus.1.fibrations.on.Enriques}
Let \(f\colon X \rightarrow \IP^1\) be a genus one fibration on \(X\).
\begin{itemize}
    \item If \(p \neq 2\), then \(f\) is an elliptic fibration with two half-fibers, and each of them is either non-singular or singular of multiplicative type.
    \item If \(p = 2\) and \(X\) is classical, then \(f\) is an elliptic or quasi-elliptic fibration with two half-fibers, and each of them is either an ordinary elliptic curve or a singular curve of additive type.
    \item If \(p = 2\) and \(X\) is ordinary, then \(f\) is an elliptic fibration with one half-fiber, which is either a non-singular ordinary elliptic curve or a singular curve of multiplicative type.
    \item If \(p = 2\) and \(X\) is supersingular, then \(f\) is an elliptic or quasi-elliptic fibration with one half-fiber, which is either a supersingular elliptic curve or a singular curve of additive type. 
\end{itemize}
\end{lemma}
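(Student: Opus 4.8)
The plan is to separate the elliptic and quasi-elliptic cases, to read off the \emph{number} of half-fibers from the canonical bundle formula, and then to determine the \emph{type} of each half-fiber from the torsion it carries, matched against \(\mathrm{Pic}^{\tau}_X\). Since \(X\) is minimal, \(f\) is relatively minimal and \(\chi(\mathcal{O}_X)=1\), while \(\omega_X\) is numerically trivial. For an elliptic \(f\) I would invoke the canonical bundle formula of Bombieri--Mumford, \(\omega_X \cong f^*(\omega_{\IP^1}\otimes L)\otimes\mathcal{O}_X(\sum_i a_i F_i)\) with \(\deg L = \chi(\mathcal{O}_X)=1\), where the \(F_i\) are the multiple fibers of multiplicity \(m_i\) and \(a_i = m_i-1+\ell_i\), the integer \(\ell_i\geq 0\) being positive only for wild fibers (which requires \(p \mid m_i\)).

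Intersecting with a general fiber and using \(\omega_X \equiv 0\) gives \(\sum_i a_i/m_i = 1\); since at least one half-fiber exists by the discussion preceding \autoref{lem: half-fiber}, the only solutions are two tame double fibers (all \(\ell_i=0\)) or a single wild double fiber (\(m_1=2\), \(\ell_1=1\)). In particular all multiplicities equal \(2\), and wild fibers occur only when \(p=2\). These two cases are separated by \(K_X\): from the formula one computes \(K_X \sim F_2 - F_1\) in the first case and \(K_X \sim 0\) in the second, and since a half-fiber is the unique effective divisor in its class, two half-fibers occur exactly when \(K_X \neq 0\). Thus elliptic fibrations have two half-fibers on classical surfaces (always so when \(p\neq 2\)) and one half-fiber on non-classical surfaces. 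Quasi-elliptic fibrations, which exist only in characteristic \(2\) and all of whose fibers are additive, I would treat by the analogous \(K_X\)-dichotomy, again giving two half-fibers in the classical case and one in the (necessarily non-classical) case; this accounts for the additive half-fibers appearing for classical and supersingular \(X\).

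For the type of a half-fiber \(F\), a connected curve with \(p_a(F)=1\), I would examine \(\mathcal{N}_{F/X}=\mathcal{O}_F(F)\in \mathrm{Pic}^0(F)\). Restricting \(\mathcal{O}_X(2F)\cong\mathcal{O}_X(\text{general fiber})\) shows it is \(2\)-torsion, nontrivial precisely for tame fibers. Since \(\mathrm{Pic}^0(F)\) is \(\mathbb{G}_m\), \(\mathbb{G}_a\), or an elliptic curve according as \(F\) is of multiplicative, additive, or smooth type, and \(\mathbb{G}_a\) has no nontrivial \(2\)-torsion when \(p\neq 2\), additive half-fibers are excluded for \(p\neq 2\), leaving the smooth and multiplicative cases as claimed. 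In characteristic \(2\) the decisive input is that \(\mathrm{Pic}^{\tau}_X\) is \(\mathbb{Z}/2\mathbb{Z}\), \(\mathbf{\mu}_2\), or \(\mathbf{\alpha}_2\) according as \(X\) is classical, ordinary, or supersingular, and that the (possibly infinitesimal, wild) torsion carried by \(F\) must be compatible with it: a \(\mathbf{\mu}_2\) forces the wild \(F\) to be multiplicative or ordinary elliptic, an \(\mathbf{\alpha}_2\) forces it to be additive or supersingular elliptic, whereas an étale \(\mathbb{Z}/2\mathbb{Z}\)-class on a tame double fiber forces a smooth \(F\) to be ordinary elliptic (the remaining additive half-fibers on classical surfaces being the quasi-elliptic ones from the previous step). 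In particular multiplicative half-fibers, being necessarily wild, occur only on ordinary surfaces, and a supersingular elliptic half-fiber only on supersingular surfaces, recovering the four lists.

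The elementary parts are the canonical bundle computation and the \(2\)-torsion analysis of \(\mathbb{G}_m\) and \(\mathbb{G}_a\). I expect the main obstacle to be the characteristic-\(2\) bookkeeping: showing that the unique half-fiber on a non-classical surface is genuinely wild with \(a_1=2\), that the infinitesimal torsion of \(\mathcal{N}_{F/X}\) (or its wild refinement) is faithfully recorded by \(\mathrm{Pic}^{\tau}_X\), and that ordinary surfaces admit no quasi-elliptic fibration, the \(\mathbf{\mu}_2\)-structure being incompatible with the all-additive, \(\mathbb{G}_a\)-type fibers of such a fibration. I would address these either through Bombieri--Mumford's classification of the multiple fibers of a genus one fibration, or by pulling \(f\) back along the canonical cover \(\tilde{X}\to X\) defined by \(\mathrm{Pic}^{\tau}_X\), on which the half-fibers become non-multiple fibers and the torsion data can be read off directly.
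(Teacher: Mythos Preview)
The paper does not prove this lemma; it is quoted verbatim from \cite[Theorem~4.10.3]{CossecDolgachevLiedtke} and used as a black box in the preliminaries. There is therefore no argument in the paper to compare yours against. Your outline is essentially the standard Bombieri--Mumford approach that the cited reference carries out, and its architecture (canonical bundle formula for the number of half-fibers, torsion of \(\mathcal{O}_F(F)\) for their type, compatibility with \(\mathrm{Pic}^{\tau}_X\) in characteristic~\(2\)) is correct.

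Two points to tighten. First, in the single wild fiber case the equation \(\sum a_i/m_i = 1\) alone does not force \(m_1 = 2\): with \(\ell_1 = 1\) any multiplicity divisible by \(p\) solves it numerically. Pinning down \(m_1 = 2\) requires the finer Bombieri--Mumford constraints on wild fibers (relating the order of \(\mathcal{O}_F(F)\), the length of \(R^1f_*\mathcal{O}_X\), and \(h^1(\mathcal{O}_X)\)), which you defer to but should name explicitly. Second, your parenthetical that the additive half-fibers on classical surfaces in characteristic \(2\) are ``the quasi-elliptic ones'' is wrong: an \emph{elliptic} fibration on such a surface can perfectly well have an additive half-fiber. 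The tame half-fiber carries a nontrivial \(2\)-torsion class in \(\mathrm{Pic}^0(F)\), and in characteristic \(2\) it is \(\mathbb{G}_m\) that has no nontrivial \(2\)-torsion points (excluding multiplicative types), whereas \(\mathbb{G}_a\) is \(2\)-torsion everywhere (allowing additive types). So the dichotomy ``ordinary elliptic or additive'' in the classical case comes directly from the normal bundle analysis, not from the quasi-elliptic branch.
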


The symbol \(\sim\) denotes linear equivalence, and \(W_X^{\nod}\) denotes the Weyl group generated by reflections along classes of \((-2)\)-curves on~\(X\).

\begin{lemma}[{\cite[Theorem~3.2.1]{Cossec.Dolgachev}} or {\cite[Theorem~2.3.3]{CossecDolgachevLiedtke}}] \label{lem: reducibility}
If \(D\) is an effective divisor on \(X\) with \(D^2 \geq 0\), then there exist non-negative integers \(a_i\) and \((-2)\)-curves \(R_i\) such that
\[
    D \sim D' + \sum_i a_i R_i,
\]
where \(D'\) is the unique nef divisor in the \(W_X^{\nod}\)-orbit of \(D\). In particular, \(D^2 = D'^2\).
\end{lemma}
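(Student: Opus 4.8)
The plan is to prove this purely lattice-theoretically, by running a reflection algorithm inside the positive cone of \(\Num(X)\) and reading off the decomposition from the telescoping sum of the reflection steps; the only genuinely nonelementary input is the fundamental-domain property of the nef cone, which will give uniqueness.

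First I would pass to \(\Num(X)\), which for an Enriques surface has signature \((1,9)\), and record two preliminaries. Since \(D\) is effective, nonzero and satisfies \(D^2 \geq 0\), its class lies in the closure \(\overline{\mathcal{C}^+}\) of the component of the positive cone containing ample classes. Each reflection \(s_R(x) = x + (x.R)R\) along a \((-2)\)-curve \(R\) is an isometry preserving \(\overline{\mathcal{C}^+}\): for an ample class \(H\) one computes \(H.s_R(H) = H^2 + (H.R)^2 > 0\), so \(s_R\) maps \(\mathcal{C}^+\) into itself. The key observation is that a class \(x \in \overline{\mathcal{C}^+}\) fails to be nef if and only if \(x.R < 0\) for some \((-2)\)-curve \(R\): any irreducible curve \(C\) with \(x.C < 0\) must satisfy \(C^2 < 0\) (otherwise \(C \in \overline{\mathcal{C}^+}\) and the reverse Cauchy--Schwarz inequality forces \(x.C \geq 0\)), and by adjunction, using \(K_X \equiv 0\) in \(\Num(X)\), an irreducible curve of negative self-intersection on \(X\) is a \((-2)\)-curve. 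Note this criterion needs no effectivity of \(x\), only that \(x \in \overline{\mathcal{C}^+}\).

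Next I would run the algorithm: set \(w_0 = \mathrm{id}\), and while \(w_k(D)\) is not nef, pick by the criterion a \((-2)\)-curve \(R_{(k)}\) with \(w_k(D).R_{(k)} < 0\) and put \(w_{k+1} = s_{R_{(k)}} \circ w_k\). Fixing an ample class \(H\), the integers \(H.w_{k+1}(D) = H.w_k(D) + (w_k(D).R_{(k)})(H.R_{(k)})\) form a strictly decreasing sequence of positive numbers (each correction term is negative, while \(H.x > 0\) on \(\overline{\mathcal{C}^+}\setminus\{0\}\)), so the algorithm terminates at a nef class \(D' := w_N(D)\), with \(D'^2 = D^2\) since reflections are isometries. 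The decomposition then falls out by telescoping, since each step subtracts a positive multiple of a \((-2)\)-curve:
\[
w_k(D) - w_{k+1}(D) = -(w_k(D).R_{(k)})\,R_{(k)} = a_k R_{(k)}, \qquad a_k := |w_k(D).R_{(k)}| > 0.
\]
Summing gives \(D - D' = \sum_k a_k R_{(k)}\), a non-negative integral combination of \((-2)\)-curves, so taking \(D'\) to be the honest divisor class \(D - \sum_i a_i R_i\) yields \(D \sim D' + \sum_i a_i R_i\) on the nose (the numerical-versus-linear ambiguity by \(K_X\) is absorbed into this choice of representative).

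The step I expect to be the main obstacle is the \emph{uniqueness} of the nef class in the \(W_X^{\nod}\)-orbit, equivalently the statement that the closed nef cone is a fundamental domain for \(W_X^{\nod}\) acting on \(\overline{\mathcal{C}^+}\). I would prove it by the standard Coxeter argument: if \(x,y\) are both nef with \(y = w(x)\) and \(w = s_{R_1}\cdots s_{R_\ell}\) is reduced with \(\ell \geq 1\), then \(w^{-1}(R_1) = -\sum_i c_i R_i\) with \(c_i \geq 0\), so \(y.R_1 = x.(w^{-1}R_1) \leq 0\); nefness of \(y\) forces \(y.R_1 = 0\), hence \(s_{R_1}(y) = y\) and \(y = (s_{R_2}\cdots s_{R_\ell})(x)\) with a shorter word, and induction on \(\ell\) gives \(x = y\). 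The only technical ingredient here is that \(\bigl(W_X^{\nod},\{s_R\}\bigr)\) is a Coxeter system with the \((-2)\)-curves as simple roots (so that reduced words and the sign of \(w^{-1}(R_1)\) are available); everything else reduces to the elementary cone geometry of the two preliminaries above.
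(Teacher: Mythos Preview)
The paper does not prove this lemma at all; it is stated with a citation to \cite[Theorem~3.2.1]{Cossec.Dolgachev} and \cite[Theorem~2.3.3]{CossecDolgachevLiedtke} and used as a black box. So there is no in-paper argument to compare against.

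That said, your proof is essentially the standard one that appears in those references, and it is correct. The reflection algorithm and its termination via a strictly decreasing ample degree are exactly right, and the telescoping identity cleanly produces the decomposition with non-negative coefficients. Your handling of the passage from \(\Num(X)\) to linear equivalence is slightly informal but fine: since each reflection step literally subtracts a positive multiple of an honest \((-2)\)-curve from the divisor class in \(\Pic(X)\), the relation \(D \sim D' + \sum a_i R_i\) holds in \(\Pic(X)\), not merely numerically, and no appeal to \(K_X\) is needed.

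You have also correctly located the one genuinely nontrivial ingredient: uniqueness of the nef representative, equivalently that the nef cone is a fundamental domain for \(W_X^{\nod}\) on \(\overline{\mathcal{C}^+}\). Your Coxeter argument is valid once one knows that \((W_X^{\nod},\{s_R\})\) is a Coxeter system with the \((-2)\)-curves as simple roots; this is precisely the substantive content of the cited theorems (via Vinberg's theory of hyperbolic reflection groups), so you are not reasoning circularly but rather isolating where the real work lies. If you want to avoid invoking the Coxeter formalism, an equivalent route is to observe directly that the chamber \(\{x \in \overline{\mathcal{C}^+} : x.R \ge 0 \text{ for all } (-2)\text{-curves } R\}\) is a fundamental domain for the reflection group it generates, which is Vinberg's theorem; either way the input is the same.
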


The following bound is an immediate consequence of the Shioda--Tate formula.

\begin{lemma} \label{lem: combinatorial.0}
The number of irreducible curves contained in \(s\) fibers of any genus one fibration on~\(X\) is at most \(8+s\). 
\end{lemma}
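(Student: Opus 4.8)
The plan is to reduce the statement to a rank count inside \(\Num(X)\), which is a nondegenerate lattice of rank \(10\) and signature \((1,9)\). Fix the genus one fibration, let \(e \in \Num(X)\) be the class of one of its half-fibers, and let \(F_1,\dots,F_s\) be the \(s\) chosen fibers, writing \(F_i = \sum_{j=1}^{m_i} a_{ij} C_{ij}\) with the \(C_{ij}\) its irreducible components. Since the number of irreducible curves in these fibers is \(\sum_i m_i\), it suffices to prove \(\sum_i (m_i - 1) \le 8\).

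First I would record the relevant isotropy and orthogonality. By \autoref{lem: half-fiber} the class \(e\) is primitive and isotropic, so \(e^\perp/\langle e\rangle\) is negative definite of rank \(10-2=8\). Every component \(C_{ij}\) lies in a fiber, and a general fiber has class \(2e\) and meets each fiber component trivially, so \(e\cdot C_{ij}=0\); hence all components lie in \(e^\perp\). Moreover \(2e = [F_i] = \sum_j a_{ij}[C_{ij}]\), so \(e\) itself lies in the subspace \(V_i \subseteq \Num(X)\otimes\IQ\) spanned by the components of \(F_i\).

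The heart of the argument is to show that the image of \(V_i\) in \(e^\perp/\langle e\rangle\) has dimension exactly \(m_i-1\) and that these images are mutually orthogonal for distinct \(i\). The dimension claim rests on the fact that the components of a genus one fiber have negative semidefinite intersection matrix of corank one (an affine \(ADE\) configuration), whose radical is spanned by the multiplicity vector \((a_{ij})_j\); upgrading this to a statement about the classes requires that the only linear relation among the \([C_{ij}]\) be the fiber relation, which follows since otherwise one would get \(2e=0\) in the torsion-free group \(\Num(X)\). Thus the form on \(V_i\) has radical exactly \(\langle e\rangle\), so \(V_i/\langle e\rangle\) is negative definite of dimension \(m_i-1\) and injects into \(e^\perp/\langle e\rangle\). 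Orthogonality is immediate: components of fibers over distinct points are disjoint, so \(C_{ij}\cdot C_{i'j'}=0\) for \(i\neq i'\).

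Combining these, \(\bigoplus_i (V_i/\langle e\rangle)\) is an orthogonal direct sum of negative definite spaces of total dimension \(\sum_i(m_i-1)\) sitting inside the rank-\(8\) space \(e^\perp/\langle e\rangle\), which forces
\[
\sum_i (m_i-1) \;\le\; 8,
\]
and hence \(\sum_i m_i \le 8+s\). This inequality is exactly the content of the Shioda--Tate formula in this setting, the two subtracted ranks corresponding to the fiber class and the section-analogue. I expect the only delicate point to be the verification that the radical of the form on \(V_i\) is precisely \(\langle e\rangle\), i.e.\ that the component classes satisfy no relation beyond the fiber relation; this is where the torsion-freeness of \(\Num(X)\) and \(e\neq 0\) enter, and it is what turns the merely semidefinite fiber relation into the sharp rank bound.
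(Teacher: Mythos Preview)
Your argument is correct and is exactly the standard lattice-theoretic proof of the Shioda--Tate inequality that the paper simply cites without writing out: the paper's ``proof'' is the single sentence that the bound is an immediate consequence of the Shioda--Tate formula, and what you have supplied is precisely that consequence spelled out via the rank-$8$ negative definite quotient $e^{\perp}/\langle e\rangle$.
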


\subsection{Isotropic sequences} \label{sec: isotropic.sequences}
A \emph{\(c\)-degenerate (canonical isotropic) \(n\)-sequence on~\(X\)} is an \(n\)-tuple of the form
\[
    \big(F_1,F_1 + R_{1,1},\hdots,F_1 + \sum_{j=1}^{m_1} R_{1,j},F_2,F_2 + R_{2,1},\hdots,F_{c} + \sum_{j=1}^{m_c} R_{c,j}\big)
\]
where the \(F_i\) are half-fibers of genus one fibrations on~\(X\) and the \(R_{i,j}\) are \((-2)\)-curves satisfying the conditions
\begin{enumerate}
    \item \(F_i.F_j = 1 - \delta_{ij}\).
    \item \(R_{i,j}.R_{i,j+1} = 1\).
    \item \(R_{i,j}.R_{k,l} = 0\) unless \((i,j) = (k,l)\) or \((k,l)=(i,l+1)\).
    \item \(F_i.R_{i,1} = 1\) and \(F_i.R_{k,l} = 0\) if \((k,l) \neq (i,1)\).
\end{enumerate}
If \(c = n\), we simply call the above sequence a \emph{\(c\)-sequence}.

We say that a \(c\)-degenerate \(n\)-sequence \emph{extends} to a \(c'\)-degenerate \(n'\)-sequence if the former is contained in the latter, disregarding the ordering. The following fundamental theorem is due to Cossec and holds also in positive characteristic (cf. \cite[Proposition~6.1.7]{DolgachevKondoBook}).

\begin{theorem}[{\cite[Lemma~1.6.1, Theorem~3.3]{Cossec:Picard_group}}]  \label{thm: 10sequence}
If \(n \neq 9\), then every \(c\)-degenerate \(n\)-sequence on~\(X\) can be extended to a \(c'\)-degenerate \(10\)-sequence for some \(c' \geq c\).
\end{theorem}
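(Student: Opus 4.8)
The plan is to separate a lattice-theoretic extension statement from its geometric realization and to induct on the length $n$. First I would note that the $n$ entries $g_1,\dots,g_n$ of a $c$-degenerate $n$-sequence form an isotropic $n$-sequence of effective classes in $\Num(X)\cong U\oplus E_8$: conditions (1)–(4) give $g_i^2=0$ and $g_i.g_j=1$ for $i\neq j$ (each block $F_i+\sum_{j}R_{i,j}$ squares to $0$ and meets every other entry in $1$). It then suffices to produce one further effective isotropic class $f$ with $f.g_i=1$ for all $i$. Indeed, \autoref{lem: reducibility} rewrites $f\sim F+\sum_k a_kR_k$ with $F$ nef, isotropic, and — since $f.g_1=1$ forces $f$, hence its $W_X^{\nod}$-equivalent $F$, to be primitive — a half-fiber by \autoref{lem: half-fiber}; the correcting $(-2)$-curves then supply the chain that must be checked to assemble into a new block, enlarging the sequence to a $c'$-degenerate $(n+1)$-sequence with $c'\in\{c,c+1\}$.

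For the lattice step, let $L=\langle g_1,\dots,g_n\rangle$, whose Gram matrix $J-I$ has determinant $(-1)^{n-1}(n-1)$; for $2\le n\le 8$ it is nondegenerate of signature $(1,n-1)$ and its orthogonal complement $L^\perp$ is negative definite of rank $10-n\ge 2$. The conditions $f.g_i=1$ determine the $L\otimes\mathbb{Q}$-component $v_0$ of $f$, while $f^2=0$ reads $w^2=-v_0^2$ for its $L^\perp$-component $w$. Producing an integral solution $f=v_0+w\in\Num(X)$ is thus a representation problem in a coset of the definite lattice $L^\perp$, and effectivity is then automatic: Riemann–Roch gives $\chi(\mathcal O_X(f))=1$, so $f$ or $K_X-f$ is effective, and $f.F_1=1>0$ with $F_1$ nef excludes the latter. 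This representability is exactly where the hypothesis $n\neq 9$ enters — for $n=9$ the complement $L^\perp$ has rank $1$, $w$ is determined up to sign, and integrality (or, after reduction, realizability as a genuine half-fiber) can fail, which is the top-length analogue of the $\tilde E_8$ obstruction of \autoref{thm: non-degeneracy1}.

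Running the induction from the base case $n=1$ (where a complementary isotropic class with $f.F_1=1$ visibly exists in $U\oplus E_8$) up to length $10$ necessarily passes through length $9$, and since an arbitrary $9$-sequence need not extend, I would exploit the rank-$\ge 2$ freedom at the steps $n\le 8$ to choose each new $f$ so that the eventual rank-$1$ complement still carries the final isotropic vector integrally. The main obstacle, however, is the geometric realization at each step: after reducing $f$ to $F+\sum_k a_kR_k$, one must show the correcting $(-2)$-curves form a single $A$-type chain attached to $F$ exactly as conditions (2)–(4) require, with no spurious intersections against the earlier blocks. Here I would combine the intersection data $F.g_i=1$ with the Shioda–Tate bound \autoref{lem: combinatorial.0}, which caps the number of $(-2)$-curves available in the relevant fibers, to force the chain into the required linear shape; verifying that this always yields a valid block — rather than a branched or broken configuration — is the technical heart of the argument and the reason the original proof is so lengthy.
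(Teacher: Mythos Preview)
The paper does not give its own proof of this theorem; it is quoted from Cossec \cite[Lemma~1.6.1, Theorem~3.3]{Cossec:Picard_group} (with the positive-characteristic extension referenced to \cite[Proposition~6.1.7]{DolgachevKondoBook}), so there is no in-paper argument to compare against directly.

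Your outline does match the architecture of the cited sources---a purely lattice-theoretic extension step followed by a geometric realization---but the geometric half, as you present it, has a genuine gap rather than merely a missing routine check. The decomposition $f\sim F+\sum_k a_kR_k$ of \autoref{lem: reducibility} gives no control over the $R_k$: the coefficients $a_k$ need not be~$1$, the curves need not form an $A$-chain (they could sit in a fiber of type $\I_n^*$, $\IV^*$, etc.), and nothing prevents them from meeting earlier blocks. The Shioda--Tate bound \autoref{lem: combinatorial.0} caps the number of fiber components but does not force a chain shape, so the mechanism you propose cannot by itself produce conditions (2)--(4). In Cossec's argument the realization is not done one step at a time: one first extends the \emph{numerical} isotropic sequence all the way to length~$10$ inside $\Num(X)$ (this is the content of his Lemma~1.6.1, and is where $n\neq 9$ is used), and only then applies reflections in $W_X^{\nod}$ to move the whole $10$-tuple into a canonical position; the $c'$-degenerate structure records how many of the ten classes fall into the same nef orbit, rather than arising from an analysis of correction chains. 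Your inductive plan---geometrize, then extend, then geometrize again---also forces you to traverse $n=9$ and to make ``foresighted'' choices at earlier steps so that the rank-$1$ complement behaves; this is an additional difficulty that the extend-first strategy avoids entirely.
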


A generic Enriques surface does not contain any \((-2)\)-curve. Thus, by \autoref{thm: 10sequence} every \(c\)-sequence with \(c \neq 9\) can be extended to a \(10\)-sequence. 

\begin{lemma}[{\cite[Lemma~2.6.3]{CossecDolgachevLiedtke}} or {\cite[Lemma~3.5]{DolgachevMartin}}] \label{lem: new_combinatorialtwofibrations}
If \((F_1,F_2)\) is a \(2\)-sequence, then \(F_1\) and \(F_2\) do not have common irreducible components.
\end{lemma}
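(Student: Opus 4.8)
The plan is to argue by contradiction. Suppose that \(F_1\) and \(F_2\) share a common irreducible component, and let \(G\) be their greatest common divisor, i.e.\ the effective divisor obtained by taking the component-wise minimum of the multiplicities; by assumption \(G \neq 0\). Write \(F_1 = G + A_1\) and \(F_2 = G + A_2\), so that \(A_1\) and \(A_2\) are effective and have \emph{no} common component. Recall that, being half-fibers, \(F_1\) and \(F_2\) are nef, primitive, and satisfy \(F_i^2 = 0\) by \autoref{lem: half-fiber}, while \(F_1 \cdot F_2 = 1\) by definition of a \(2\)-sequence.

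First I would rule out \(A_1 = 0\) (and symmetrically \(A_2 = 0\)): if \(A_1 = 0\), then \(F_1 \leq F_2\), so \(E := F_2 - F_1\) is effective with \(F_2 \cdot E = F_2^2 - F_1 \cdot F_2 = -1 < 0\), contradicting the nef-ness of \(F_2\). Hence both \(A_1\) and \(A_2\) are non-zero; in particular \(G \neq F_1\) and \(G\) is a proper subdivisor of the fiber \(2F_1\) of \(|2F_1|\).

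Next I would exploit that \(G \leq F_1\) is supported on the components of the single fiber \(2F_1\). Every component of a fiber is orthogonal to the fiber class, and hence to \(F_1\), so \(F_1 \cdot G = 0\); likewise \(F_2 \cdot G = 0\). Expanding \(F_i = G + A_i\) then gives \(A_1 \cdot G = A_2 \cdot G = -G^2\), and substituting into \(F_1 \cdot F_2 = 1\) yields the key identity \(A_1 \cdot A_2 = 1 + G^2\).

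The crux is to bound \(G^2 \leq -2\). The components of \(2F_1\) span a negative semidefinite sublattice of \(\Num(X)\) whose radical is generated by the primitive class \(F_1\) (primitivity again by \autoref{lem: half-fiber}). Since \(G\) lies in this sublattice, \(G^2 = 0\) would force \(G \in \mathbb{Z} F_1\), hence \(G = F_1\) as \(0 < G \leq F_1\), contradicting \(A_1 \neq 0\); thus \(G^2 < 0\), and since \(\Num(X)\) is even, \(G^2 \leq -2\). Therefore \(A_1 \cdot A_2 = 1 + G^2 \leq -1 < 0\). But \(A_1\) and \(A_2\) are effective with no common component, so \(A_1 \cdot A_2 \geq 0\), a contradiction. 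I expect the negative-semidefinite/radical argument bounding \(G^2\) to be the main obstacle; everything else reduces to the short intersection computation above.
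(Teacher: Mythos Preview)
Your argument is correct. The paper does not supply its own proof of this lemma---it is simply cited from \cite[Lemma~2.6.3]{CossecDolgachevLiedtke} and \cite[Lemma~3.5]{DolgachevMartin}---so there is no in-paper argument to compare against. The route you take (pass to the greatest common divisor \(G\), use Zariski's lemma on the negative-semidefinite lattice of fiber components with radical \(\mathbb{Z}F_1\) to force \(G^2\le -2\), and then derive the contradiction \(A_1\cdot A_2 = 1+G^2 < 0\) against \(A_1,A_2\) having no common component) is the standard and natural proof; the step you flagged as the ``main obstacle'' is exactly the right place to invoke primitivity of \(F_1\), and your justification there is sound.
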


\section{Non-extendable \texorpdfstring{\(2\)}{2}-sequences}

In this section, we investigate the extendability of \(2\)-sequences and the properties of non-extendable \(2\)-sequences.

\begin{proposition} \label{prop: geometricargument}
Let \((F_1,F_2)\) be a \(2\)-sequence on an Enriques surface \(X\). If there are two simple fibers \(G_1 \in |2F_1|\) and \(G_2 \in |2F_2|\) with a common component, then there is a half-fiber \(F_3\) extending \((F_1,F_2)\) to a \(3\)-sequence.
\end{proposition}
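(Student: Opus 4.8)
The plan is to produce the third half-fiber explicitly as the nef model of the class $F_1 + F_2 - R$, where $R$ is the shared component, and to use the \emph{simplicity} of $G_1$ and $G_2$ to guarantee that passing to the nef model does not destroy the intersection numbers with $F_1$ and $F_2$.

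First I would analyze $R$. Since $G_1 \in |2F_1|$ and $G_2 \in |2F_2|$ are fibers of the genus one fibrations $|2F_1|$ and $|2F_2|$, any proper component of such a fiber is a $(-2)$-curve; moreover $R$ cannot equal all of $G_1$ or $G_2$, since this would force $2F_1 \sim 2F_2$ and hence $F_1 \equiv F_2$, contradicting $F_1.F_2 = 1$. Thus $R$ is a $(-2)$-curve, and as a fiber component it satisfies $R.(2F_1) = R.G_1 = 0$ and $R.(2F_2) = R.G_2 = 0$, so $R.F_1 = R.F_2 = 0$. Setting $D := F_1 + F_2 - R$, one computes directly $D^2 = 0$ and $D.F_1 = D.F_2 = 1$. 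To see that $D$ is effective, I would apply Riemann--Roch: $\chi(D) = 1 + \tfrac12 D^2 = 1$, so it suffices to check $h^2(D) = h^0(K_X - D) = 0$, which holds because $(K_X - D).F_1 = -1 < 0$ while $F_1$ is nef, so $K_X - D$ cannot be effective.

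Applying \autoref{lem: reducibility} to $D$, I obtain $D \sim F_3 + \sum_i a_i R_i$ with $a_i \geq 0$, where $F_3$ is the unique nef divisor in the $W_X^{\nod}$-orbit of $D$ and $F_3^2 = D^2 = 0$. By \autoref{lem: half-fiber}, $F_3$ is a half-fiber as soon as it is primitive, and any class meeting $F_1$ in $1$ is automatically primitive; so the whole statement reduces to proving $F_3.F_1 = F_3.F_2 = 1$. Because $F_1$, $F_3$ and the $R_i$ are effective and $F_1$ is nef, the identity $1 = D.F_1 = F_3.F_1 + \sum_i a_i(R_i.F_1)$ forces $F_3.F_1 \in \{0,1\}$, and similarly $F_3.F_2 \in \{0,1\}$. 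I must therefore rule out the value $0$, which is the crux of the argument. Suppose $F_3.F_1 = 0$: since $F_3$ is nef and isotropic, and an isotropic class orthogonal to $F_1$ in the signature $(1,9)$ lattice $\Num(X)$ is a multiple of $F_1$, this forces $[F_3]$ proportional to $[F_1]$; comparing with $D.F_2 = 1$ then pins down $[F_3] = [F_1]$ and $\sum_i a_i(R_i.F_2) = 0$. Substituting into $D = F_1 + F_2 - R$ yields $F_2 \equiv R + \sum_i a_i R_i =: \Delta$, an effective divisor numerically equal to the nef primitive class $F_2$. By \autoref{lem: half-fiber}, $\Delta$ is itself a half-fiber of $|2F_2|$, so $2\Delta \sim 2F_2$ is a multiple fiber whose support contains $R$. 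But $R$ lies on the \emph{simple} fiber $G_2$ of $|2F_2|$, and a curve lies on a unique fiber, so $R$ cannot also lie on the distinct multiple fiber $2\Delta$; this is a contradiction. The case $F_3.F_2 = 0$ is excluded in exactly the same way, now invoking the simplicity of $G_1$. Hence $F_3.F_1 = F_3.F_2 = 1$, and $(F_1, F_2, F_3)$ is the required $3$-sequence.

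The step I expect to be the main obstacle is precisely this last one: controlling the intersection numbers of the nef model $F_3$ with $F_1$ and $F_2$, since the reflections used to pass to the nef model need not fix $F_1$ or $F_2$. This is exactly where the hypothesis that $G_1$ and $G_2$ are \emph{simple} (rather than multiple) fibers is indispensable, as it is what prevents the constructed class from collapsing numerically back onto $F_1$ or $F_2$.
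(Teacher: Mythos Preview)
Your proof is correct and follows essentially the same strategy as the paper's: both pass to the nef model of $F_1+F_2-R$ via \autoref{lem: reducibility} and then use the simplicity of $G_1,G_2$ to rule out the degenerate cases $F_3 \equiv F_1$ or $F_3 \equiv F_2$. The only cosmetic differences are that the paper obtains effectivity by choosing a member of the pencil $|F_1+F_2|$ through $R$ rather than by Riemann--Roch, and handles the degenerate cases via linear (rather than numerical) equivalence, using that $|F_2|$ and $|F_2+K_X|$ are $0$-dimensional.
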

\begin{proof}
By \cite[Proposition~2.6.1]{CossecDolgachevLiedtke}, the linear system \(|F_1 + F_2|\) is a pencil of curves without fixed components. Let \(R\) be a common component of \(G_1\) and \(G_2\). Then, \(R\) is a \((-2)\)-curve with \(R.(F_1 + F_2) = 0\). Hence, there exists a curve \(C \in |F_1 + F_2|\) containing \(R\). Set \(C' \coloneqq C - R\). Then, \(C'.F_i = (F_1+F_2-R).F_i = 1\) and \(C'^2 = (C - R)^2 = (F_1 + F_2)^2 - 2(F_1 + F_2).R + R^2 = 0\). Using \autoref{lem: reducibility}, we write \(C' = C'' + \sum_i a_iR_i\) with \(a_i \geq 0\), \(C''\) nef and \(C''^2 = C'^2=0\). Then, \(C''.F_i = C'.F_i - \sum a_jR_j.F_i \leq 1\).

Since \(C''\) is nef, \(C''.F_i=0\) for at most one \(i\) and then \(C''.F_j=1\) for \(j\ne i\). This implies that \(C''\) is primitive, and therefore \(C''\) is a half-fiber of some genus one fibration on \(X\) by \autoref{lem: half-fiber}. Hence, if \(C''.F_1 = C''.F_2 = 1\), then \((F_1,F_2,C'')\) is a \(3\)-sequence.

Thus, let us exclude the possibility that \(C''.F_1 = 0\) (the case \(C''.F_2 = 0\) is analogous). In this case, \(C''\) is a half-fiber of \(|2F_1|\). If \(C'' = F_1\), then
\[
    F_2 \sim (F_1 + F_2 - C'') \sim (F_1 + F_2 - C) + (R + \sum a_iR_i) \sim (R + \sum a_iR_i).
\]
Since \(|F_2|\) is \(0\)-dimensional, this shows that \(R\) is a component of \(F_2\). But this is impossible, since we assumed that \(R\) is a component of a simple fiber \(G_2 \in |2F_2|\). If \(C'' \neq F_1\), then the above argument applied to \(|F_2 + K_X|\) shows that \(R\) is a component of the other half-fiber of \(|2F_2|\), again contradicting the fact that \(R\) is contained in the simple fiber \(G_2 \in |2F_2|\).
\end{proof}

\begin{corollary} \label{claim:1}
If \((F_1,F_2)\) is a \(2\)-sequence on \(X\) that does not extend to a \(3\)-sequence, then one of the half-fibers of \(|2F_1|\) or \(|2F_2|\) is reducible and has at least \(4\) components.
\end{corollary}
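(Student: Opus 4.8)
The strategy is to prove the contrapositive: assuming that \emph{neither} half-fiber of \(|2F_1|\) or \(|2F_2|\) is reducible with at least~\(4\) components, I will produce a third half-fiber extending \((F_1,F_2)\) to a \(3\)-sequence, using \autoref{prop: geometricargument} as the main engine. That proposition reduces everything to finding two \emph{simple} fibers \(G_1 \in |2F_1|\) and \(G_2 \in |2F_2|\) sharing a common component. So the whole task becomes: if the half-fibers are all ``small'' (non-reducible, or reducible with at most~\(3\) components), then \(|2F_1|\) and \(|2F_2|\) must share a component among their simple fibers.

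First I would set up the fiber-counting. By \autoref{lem: new_combinatorialtwofibrations}, \(F_1\) and \(F_2\) have no common irreducible components, so in particular the half-fibers of the two pencils are disjoint as sets of components. Then I would count irreducible curves using \autoref{lem: combinatorial.0}: the curves contained in \(s\) fibers of a genus one fibration number at most \(8+s\). The key numerical input is that \(F_1.F_2 = 1\), which forces \(F_2\) (and each simple fiber of \(|2F_2|\)) to be a \emph{bisection} of the fibration \(|2F_1|\), and vice versa. The plan is to argue that if both half-fibers of \(|2F_1|\) have few components, the reducible simple fibers of \(|2F_1|\) cannot absorb ``too many'' components, and then to show that a component of some simple fiber \(G_2 \in |2F_2|\) is forced to also lie in a simple fiber of \(|2F_1|\). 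Concretely, one compares the total number of \((-2)\)-curves that can appear in the fibers of \(|2F_1|\) (bounded via Shioda--Tate / \autoref{lem: combinatorial.0}) against the components required by the fibers of \(|2F_2|\), exploiting that these two families of reducible fibers interlock because every component is a \((-2)\)-curve of self-intersection \(-2\) and the intersection form is fixed.

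The main obstacle, I expect, is making the combinatorics of ``no common simple component'' genuinely impossible rather than merely tight. The danger is a boundary case where the two fibrations share components \emph{only} in their half-fibers (double fibers) but never in simple fibers, which would evade \autoref{prop: geometricargument} without making any half-fiber have \(\geq 4\) components. I would handle this by a careful case analysis on the fiber types of the half-fibers allowed by \autoref{lem: genus.1.fibrations.on.Enriques}: since in the relevant characteristic these half-fibers are of additive or multiplicative type, a half-fiber with at most~\(3\) components has a constrained dual graph (a short chain or a single curve), and I would check in each such configuration that the bisection \(F_2\) cannot meet \(|2F_1|\) without its simple fibers picking up a shared component. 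Finally, I would combine the two pencils symmetrically, so that the assumed bound on \emph{both} \(|2F_1|\) and \(|2F_2|\) is used, and invoke \autoref{prop: geometricargument} to conclude that \((F_1,F_2)\) extends, contradicting non-extendability.
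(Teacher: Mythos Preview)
Your plan has a genuine gap: the implication you need---``if every half-fiber of \(|2F_1|\) and \(|2F_2|\) has at most \(3\) components, then some simple fibers \(G_1\in|2F_1|\) and \(G_2\in|2F_2|\) share a component''---is simply false. On a general Enriques surface there are no \((-2)\)-curves at all, so the half-fibers are irreducible and there is nothing for the simple fibers to share; yet such a \(2\)-sequence certainly extends. So \autoref{prop: geometricargument} cannot be the sole engine. The Shioda--Tate bound of \autoref{lem: combinatorial.0} constrains the number of components within a \emph{single} fibration; it says nothing about whether two different pencils share components, and your proposed case analysis on the small fiber types of the half-fibers does not force any interaction between the reducible \emph{simple} fibers of \(|2F_1|\) and \(|2F_2|\).

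The missing idea is the use of \autoref{thm: 10sequence}. The paper first extends \((F_1,F_2)\) to a \(2\)-degenerate \(10\)-sequence, producing eight explicit \((-2)\)-curves \(R_{i,j}\) (\(m+n=8\)) with prescribed intersections with \(F_1\) and \(F_2\). Each \(R_{i,j}\) with \(j\neq 1\) satisfies \(R_{i,j}.F_1=R_{i,j}.F_2=0\), so it lies simultaneously in a fiber of \(|2F_1|\) and a fiber of \(|2F_2|\); \autoref{prop: geometricargument} then forces one of these two fibers to be a half-fiber. Since the \(R_{i,j'}\) with \(j'\ge 2\) form a chain, they all lie in the same half-fiber, and the pigeonhole \(m\ge 4\) or \(n\ge 4\) gives the desired bound on the number of components. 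In short, \autoref{prop: geometricargument} is used the other way around: not to produce a shared simple component and hence a third half-fiber, but, under the non-extendability hypothesis, to certify that the many \((-2)\)-curves supplied by the \(10\)-sequence must crowd into a half-fiber.
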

\begin{proof}
By \autoref{thm: 10sequence}, we may assume that there exist \((-2)\)-curves \(R_{1,1},\hdots,R_{1,m},R_{2,1},\hdots,R_{2,n}\) with \(m+n = 8\) and such that \((F_1,F_1 + R_{1,1},\hdots,F_2,F_2 + R_{2,1},\hdots)\) is a \(2\)-degenerate \(10\)-sequence. Choose \(R\) to be one of the curves \(R_{i,j}\) with \(j \neq 1\). Since \(R.F_1 = R.F_2 = 0\), \(R\) is a component of two fibers \(G_1 \in |2F_1|\) and \(G_2 \in |2F_2|\). By \autoref{prop: geometricargument}, \(G_1\) and \(G_2\) cannot both be simple. Therefore, \(R\) is a component of a reducible half-fiber, which also contains all \(R_{i,j'}\) with \(j' \neq 1\). Since either \(m \geq 4\) or \(n \geq 4\), one of the reducible half-fibers has at least \(4\) components.
\end{proof}

The bound of \autoref{claim:1} on the number of components is not sharp, but it is enough to prove the following theorem, which is the key result of this paper.

\begin{theorem} \label{thm: directproof}
If \((F_1,F_2)\) is a \(2\)-sequence on \(X\) that does not extend to a \(3\)-sequence, then either \(|2F_1|\) or \(|2F_2|\) is a special genus one fibration with a fiber of type \(\II^*\).
\end{theorem}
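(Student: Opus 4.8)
The plan is to take the reducible half-fiber furnished by \autoref{claim:1} and to show, using non-extendability, that the only Kodaira type it or its companion fibration can realize is $\mathrm{II}^*$. First I would fix notation coming from the degenerate $10$-sequence used in the proof of \autoref{claim:1}. After relabelling $F_1$ and $F_2$ if necessary, I may assume that $g \coloneqq |2F_2|$ carries a reducible half-fiber $\Phi$ with at least four components, that $\Phi$ contains a chain $R_{1,1}, \dots, R_{1,m}$ of $(-2)$-curves from the sequence, and that $R_{1,1}.F_1 = 1$; this last equality exhibits $R_{1,1}$ as a $(-2)$-bisection of $f \coloneqq |2F_1|$, so $f$ is special. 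I would take the additive case as the main case: a reducible half-fiber is of additive type in characteristic $2$ by \autoref{lem: genus.1.fibrations.on.Enriques} (the case of genuine interest, as extra-special surfaces exist only there) and of multiplicative type otherwise, treated analogously. A preliminary remark I would record is that any bisection of $g$ meets $\Phi$ transversally in a single simple (multiplicity-one) component; indeed $F_1.\Phi = F_1.F_2 = 1$ and the components of $\Phi$ occur with positive multiplicities.

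The heart of the matter is that, among additive fibers with at least four components, namely $\mathrm{I}_n^*$, $\mathrm{IV}^*$, $\mathrm{III}^*$ and $\mathrm{II}^*$, only $\mathrm{II}^*$ has a unique simple component. If $\Phi$ is of type $\mathrm{II}^*$, then $g$ is the fibration we want, its special-ness witnessed by a $(-2)$-curve through the unique simple component, and we are done. Otherwise $\Phi$ has a simple component $\Theta$ distinct from the one met by $F_1$, whence $\Theta.(F_1 + F_2) = 0$. Running the construction from the proof of \autoref{prop: geometricargument} with the pencil $|F_1 + F_2|$ and the curve $\Theta$, and reducing via \autoref{lem: reducibility}, produces a nef primitive class $C''$ with ${C''}^2 = 0$ and $C''.F_i \leq 1$, hence a half-fiber by \autoref{lem: half-fiber}. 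If $C''.F_1 = C''.F_2 = 1$, then $(F_1, F_2, C'')$ is a $3$-sequence, against our hypothesis; so I would be driven to the degenerate alternative in which $C''.F_i = 0$ for exactly one $i$.

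This degenerate alternative is the crux and cannot be sidestepped: it is precisely what happens on the extra-special surfaces of type $\tilde{D}_8$ and $\tilde{E}_7$, whose half-fibers are of type $\mathrm{I}_4^*$ and $\mathrm{III}^*$, carry several simple components, and yet do not extend. To resolve it I would pass to the second special fibration $f$, in which the bisection $R_{1,1}$ meets the fiber $\Psi$ containing the transferred chain $R_{1,2}, \dots, R_{1,m}$; bounding the fiber components of $f$ by the Shioda--Tate estimate of \autoref{lem: combinatorial.0} together with $e(X) = 12$, and inserting the shape forced by the degenerate outcome, I would show that $\Psi$ saturates the bound as a fiber of type $\mathrm{II}^*$ with $R_{1,1}$ through its unique simple component, so that $f$ is special with the required fiber. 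The main obstacle is exactly this bookkeeping: one must track the half-fibers and bisections of $|2F_1|$ and $|2F_2|$ at the same time, and follow the $(-2)$-curves discarded in each reduction of \autoref{lem: reducibility}, in order to certify that exactly one of the two fibrations acquires the full $\tilde{E}_8$ configuration while staying special.
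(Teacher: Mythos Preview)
Your proposal correctly identifies the starting point (the reducible half-fiber from \autoref{claim:1}) and the mechanism of \autoref{prop: geometricargument}, but what you call the ``degenerate alternative'' is the entire content of the theorem and you do not actually resolve it. Saying that Shioda--Tate together with ``the shape forced by the degenerate outcome'' will make $\Psi$ of type $\II^*$ is not an argument: the degenerate outcome only tells you that $C''$ is numerically a half-fiber of one of the two pencils, which pins down a single divisor class, not nearly enough components to saturate a $\II^*$ fiber. On the extra-special surfaces of type $\tilde D_8$ and $\tilde E_7$ the half-fiber $\Phi$ is of type $\I_4^*$ or $\III^*$, has several simple components, and for \emph{each} choice of $\Theta$ your construction lands in the degenerate branch; iterating produces no new curves and no bound that isolates $\II^*$. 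Your remark that the multiplicative case is ``treated analogously'' is also optimistic: for $\I_n$ every component is simple, so you are \emph{always} in the degenerate branch and the additive/multiplicative split does no work.

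The paper sidesteps this with a lattice argument rather than a case analysis on fiber types. One arranges the $10$-sequence so that at least three of the $R_{2,j}$ lie inside $F_1$; then all components of $F_1$ except the bisection $R_{2,1}$, together with all the $R_{1,j}$, are absorbed into a single \emph{simple} fiber $G_2\in|2F_2|$. The sublattice $\Lambda\subset\Num(X)$ spanned by $R_{2,1}$ and the components of $G_2$ has rank $10$, and the crux is to show it has index at most $2$: then $|\det\Lambda|\le 4$ forces the rank-$8$ root lattice obtained by deleting a simple component of $G_2$ to be $E_8$, hence $G_2$ is of type $\II^*$. The index bound comes from a single auxiliary isotropic class $e$ (Cossec's $e_{9,10}$) that generates $\Num(X)$ together with the $10$-sequence: writing an effective representative of $e$ as nef part plus $(-2)$-curves via \autoref{lem: reducibility}, non-extendability forces the nef part to be a half-fiber of $|2F_1|$ or $|2F_2|$ and the $(-2)$-part to be supported on $F_1\cup G_2$, so $e\in\Lambda[F_2]$. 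This is the missing idea in your plan: non-extendability is used once, to control one class, rather than repeatedly on simple components of $\Phi$.
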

\begin{proof}
By \autoref{claim:1} we can suppose that \(F_1\) is reducible and has at least \(4\) components. Let \(R_{2,1}\) be the component of \(F_1\) meeting \(F_2\). Since \(R_{2,1}\) is a simple component of \(F_1\), we can find two more components \(R_{2,2}\), \(R_{2,3}\) of \(F_1\) forming a chain with \(R_{2,1}\).

Applying \autoref{thm: 10sequence}, we extend the \(2\)-degenerate \(5\)-sequence \((F_1,F_2,\hdots,F_2+R_{2,1}+R_{2,2}+R_{2,3})\) to a \(2\)-degenerate \(10\)-sequence \((F_1,\hdots, F_1 + \sum_{i=1}^m R_{1,i},F_2,\hdots,F_2 + \sum_{i=1}^n R_{2,i})\) with \(n \geq 3\). Clearly, all the \(R_{2,i}\) are contained in \(F_1\).

Note that \(R_{2,1}\) is a special bisection of \(|2F_2|\). Since \(F_2.F_1 = F_2.R_{2,1} = 1\) and \(F_1 - R_{2,1}\) is connected (\(R_{2,1}\) being a simple component), all components of \(F_1\) except \(R_{2,1}\) are contained in a fiber \(G_2 \in |2F_2|\). We want to prove that \(G_2\) is of type \(\II^*\).

The fiber \(G_2\) is necessarily simple because of \autoref{lem: new_combinatorialtwofibrations}.
Let \(\Lambda\) be the sublattice of \(\Num(X)\) generated by the components of \(G_2\) and \(R_{2,1}\) and let \(\Lambda'=\Lambda[F_2]\) be the sublattice of \(\Num(X)\) obtained from \(\Lambda\) by adjoining \(F_2=\frac{1}{2}G_2\in \frac{1}{2}\Lambda\).

If \(m \geq 1\), then \(R_{1,1}\) meets a component of \(F_1\) distinct from \(R_{2,1}\), hence \(R_{1,1}\) meets a component of \(G_2\). But \(R_{1,1}.G_2 = 2R_{1,1}.F_2 = 0\), so \(R_{1,1}\) is contained in \(G_2\) and then so are all the \(R_{1,j}\). This shows that \(\Lambda\) has rank \(10\), because it contains all \(R_{i,j}\), the class of \(F_1\) and the class of \(G_2 \equiv 2F_2\).
Since \(\Lambda'\) contains all the \(10\) divisors of the \(10\)-sequence and since these divisors generate a lattice of rank \(10\) and discriminant \(9\), \(\Lambda'\) has index \(1\) or \(3\) in \(\Num(X)\).

By \cite[Lemma 1.6.2]{Cossec:Picard_group}, there exists a vector \(e \in \Num(X)\) with \(e^2 = 0, e.F_1 = e.F_2 = 1\), \(e.R_{i,j} = 0\) for all \((i,j)\) except for \(e.R_{2,n-1} = 1\), and such that \(e\) and the components of the \(10\)-sequence generate \(\Num(X)\). (In the notation of \cite{Cossec:Picard_group}, we can choose \(e = e_{9,10}\), since \(n \geq 3\).)

Note that it suffices to show that \(e\) is contained in the sublattice \(\Lambda'\). Indeed, if this holds, then \(\Lambda'=\Num(X)\) and therefore \(\Lambda\) has index at most \(2\) in \(\Num(X)\). Then, consider the basis of \(\Lambda\) given by \(G_2, R_{2,1}\) and all the components of \(G_2\) except a simple one. The intersection matrix of \(\Lambda\) with respect to this basis is
\[
    \left(\begin{array}{c c |c}
    0 &2 &0\\
    2 & -2 &*\\
    \hline
    0 &* &L
    \end{array}\right),
\]
where \(L\) is a root lattice of rank \(8\) depending on the type of \(G_2\). More precisely, \(L\) is the root lattice associated to the Dynkin diagram obtained by removing a simple component from \(G_2\). By reducing along the first row and first column, we obtain that \(\det(\Lambda)=-4\det(L)\). Since \(\Lambda\) has index at most~\(2\) in the unimodular lattice \(\Num(X)\), we have \(|{\det(\Lambda)}|\le 4\), and hence \(\det(L)=1\). This forces \(L\) to be isometric to the lattice \(E_8\), and in turn \(G_2\) to be of type~\(\II^*\).

It remains to show that \(e \in \Lambda'\). Let \(D\) be an effective lift of \(e\) to \(\Pic(X)\). 
Write \(D \sim D' + \sum_i a_iR_i\) as in \autoref{lem: reducibility}. 
Since \(D'^2 = D^2 = 0\) and \(D'.F_i \leq 1\) with equality for at least one \(i\), \(D'\) is nef and primitive, hence equal to a half-fiber by \autoref{lem: half-fiber}.
If \(D'\) is not a half-fiber of \(|2F_1|\) or \(|2F_2|\), then \((F_1,F_2,D')\) is a \(3\)-sequence, contradicting our assumption that \((F_1,F_2)\) does not extend to a \(3\)-sequence. 
Therefore, we can suppose that \(D'\) is a half-fiber of \(|2F_k|\) for some \(k \in \{1,2\}\). 
Take \(j \in \{1,2\}\) with \(j \neq k\). 
Then, \(D'.F_k = 0\) and \(D'.F_j = 1\), so \((\sum_i a_iR_i)^2 = (D - D')^2 = -2\) and \((\sum_i a_iR_i).F_j = (D - D').F_j = 0\). 
Thus, \(\sum_i a_iR_i\) is a connected configuration of \((-2)\)-curves contained in a single fiber of \(|2F_j|\). 
But \((\sum_i a_iR_i).R_{2,n-1} = (D - D').R_{2,n-1} = D.R_{2,n-1} = 1\), since \(R_{2,n-1}.F_1 = R_{2,n-1}.F_2 = 0\), hence \(\sum_i a_iR_i\) is contained in the fiber of \(|2F_j|\) containing \(R_{2,n-1}\).
If \(j = 1\), this fiber is \(F_1\), whereas if \(j = 2\) this fiber is \(G_2\). In both cases, we obtain that \(e \in \Lambda'\).
\end{proof}

\section{Special genus one fibrations with a fiber of type~\texorpdfstring{\(\II^*\)}{II*}}
By \autoref{thm: directproof}, one of the two fibrations in a non-extendable \(2\)-sequence is special with a fiber of type~\(\II^*\). It turns out that admitting such a fibration is such a restrictive property that all Enriques surfaces satisfying this property can be classified. We will carry out this classification in this section, thereby proving \autoref{thm: main} (see \autoref{cor: II^* fiber.consequences}).

\begin{definition}
An Enriques surface~\(X\) is called
\begin{enumerate}
    \item \emph{of type~\(\I\)} if the dual graph of all \((-2)\)-curves on~\(X\) is
\[
\begin{tikzpicture}
\node (R24) at (0:2) [nodal] {};
\node (R23) at (45:2) [nodal] {};
\node (R22) at (90:2) [nodal] {};
\node (R21) at (135:2) [nodal] {};
\node (R8) at (180:2) [nodal] {};
\node (R11) at (180:1.2) [nodal] {};
\node (A) at (180:0.4) [nodal] {};
\node (B) at (0:0.4) [nodal] {};
\node (C) at (0:1.2) [nodal] {};
\node (R27) at (225:2) [nodal] {};
\node (R26) at (270:2) [nodal] {};
\node (R25) at (315:2) [nodal] {};
\draw (R8)--(R21)--(R22)--(R23)--(R24)--(R25)--(R26)--(R27)--(R8)--(R11) (R24)--(C);
\draw[double] (C)--(B) (B)--(A) (A)--(R11);
\end{tikzpicture}
\]
    \item \emph{of type~\(\tilde{E}_7^{(2)}\)} if the dual graph of all \((-2)\)-curves on~\(X\) is
\[
\begin{tikzpicture}
    \node (R4) at (0,0) [nodal] {};
    \node (R5) at (0,1) [nodal] {};
    \node (R6) at (1,0) [nodal] {};
    \node (R7) at (2,0) [nodal] {};
    \node (R8) at (3,0) [nodal] {};
    \node (R9) at (4,0) [nodal] {};
    \node (RX) at (5,0) [nodal] {};
    \node (R3) at (-1,0) [nodal] {};
    \node (R2) at (-2,0) [nodal] {};
    \node (R1) at (-3,0) [nodal] {};
    \node (R11) at (6,0) [nodal] {};
    \draw (R1)--(R2)--(R3)--(R4) (R5)--(R4)--(RX);
    \draw[double] (RX)--(R11);
    \draw (R9) to[bend left=60] (R11);
    \end{tikzpicture}
    \]
\end{enumerate}
\end{definition}

\begin{theorem} \label{thm: II^* fiber}
If an Enriques surface \(X\) admits a special genus one fibration with a fiber of type~\(\mathrm{II}^*\), then one of the following holds:
\begin{enumerate}
     \item \(X\) is of type~\(\I\),
    \item \(X\) is of type~\(\tilde{E}_7^{(2)}\),
    \item \(X\) is extra-special.
\end{enumerate}
\end{theorem}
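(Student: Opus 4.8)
The plan is to start from the special genus one fibration $f = |2F|$ with a fiber $G$ of type $\mathrm{II}^*$, exploiting the rigidity of the $\widetilde{E}_8$-configuration of $(-2)$-curves inside $G$. Since $f$ is special, there is a $(-2)$-curve $B$ acting as a bisection; because $G$ has type $\mathrm{II}^*$ and $B.G = 2B.F = 2$, the curve $B$ meets the $\mathrm{II}^*$ fiber in a controlled way, and I would first pin down exactly which simple component(s) of $G$ the bisection $B$ can touch. The nine components of a $\mathrm{II}^*$ fiber already account for nine irreducible curves lying in a single fiber; by \autoref{lem: combinatorial.0} the total number of irreducible curves in $s$ fibers is at most $8+s$, so there is very little room for additional $(-2)$-curves. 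This combinatorial bound is the engine that will force the dual graph into one of the three listed shapes.

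The key steps, in order, are as follows. First, analyze the remaining fibers of $f$: the Shioda--Tate/Euler-characteristic bookkeeping for an Enriques surface with a $\mathrm{II}^*$ fiber leaves Euler number $2$ for the rest, so the only possibilities for the complementary singular fibers are severely constrained (e.g.\ one further additive fiber of small type, or a pair of $\mathrm{I}_1$'s, or a double fiber). Second, I would locate the half-fiber structure: by \autoref{lem: genus.1.fibrations.on.Enriques} the fibration has one or two half-fibers depending on whether $X$ is classical, ordinary, or supersingular, and since $G$ is a simple additive fiber of type $\mathrm{II}^*$, the half-fiber(s) must sit among the remaining fibers. Third, using the bisection $B$ together with \autoref{lem: reducibility} and \autoref{lem: half-fiber}, I would produce and classify the extra $(-2)$-curves: each such curve either lies in a fiber of $f$ or is a section/bisection, and \autoref{lem: combinatorial.0} caps how many can appear. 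Matching the resulting configurations against the four target diagrams ($\widetilde{E}_8$, $\widetilde{D}_8$, $\widetilde{E}_7$, and the two new types $\mathrm{I}$ and $\widetilde{E}_7^{(2)}$) should exhaust the cases: whether the bisection closes up into a loop (giving type $\mathrm{I}$ or $\widetilde{E}_7^{(2)}$) or attaches as a chain extending the $\mathrm{II}^*$ diagram (giving the extra-special types).

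I expect the main obstacle to be the careful case distinction according to $p$ and the classification type of $X$, together with verifying that no \emph{further} $(-2)$-curves exist beyond those forced by the combinatorics. Proving that the dual graph is \emph{exactly} one of the listed graphs — rather than merely contains one of them — requires showing the absence of any additional $(-2)$-curve, which is where the bound of \autoref{lem: combinatorial.0} must be pushed to equality and combined with the constraint that the bisection $B$ and the half-fibers are accounted for. A secondary subtlety is distinguishing type $\mathrm{I}$ from type $\widetilde{E}_7^{(2)}$ and from the genuinely extra-special cases: these differ only in how the bisection and the last few curves attach, so I would need the intersection numbers of $B$ with the terminal components of $G$, read off from the requirement $B^2 = -2$ and $B.F = 1$, to separate a simple-to-simple bisection (closing a cycle) from a tangent bisection (doubling an edge, giving the $\widetilde{E}_7$-type diagrams with a double bond). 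The quasi-elliptic versus elliptic dichotomy for $f$, available in characteristic $2$, should be the final ingredient that forces supersingularity or classicality in the extra-special cases, matching \autoref{rmk: extra-special}.
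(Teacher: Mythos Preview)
Your plan has a genuine gap at the step you yourself flag as the main obstacle: showing that the dual graph contains \emph{no further} $(-2)$-curves. You propose to push \autoref{lem: combinatorial.0} to equality, but that lemma only counts components of \emph{fibers} of a single fibration; it does not bound the number of $(-2)$-bisections, of which there could a priori be infinitely many. The paper sidesteps this entirely with a reduction you do not mention: by Vinberg's criterion and a result of Namikawa, each of the five target graphs generates a reflection group of finite index in $O(\Num(X))$, so once the configuration is found as a \emph{subgraph} it is automatically the \emph{full} graph of $(-2)$-curves. This is the first paragraph of the paper's proof and is what makes the remainder short.

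Methodologically, the paper also differs from your plan of analyzing the residual fibers of the original fibration $|2F|$ via Euler numbers. Instead, after case-splitting on which component the bisection meets (including the case you omit, where the $\mathrm{II}^*$ fiber is itself a half-fiber, giving type~$\tilde{E}_8$ at once), the paper reads off \emph{new} genus one fibrations from sub-configurations of the curves already found---a $\mathrm{III}^*$ half-fiber in one branch, an $\mathrm{I}_8$ half-fiber in another---and applies \autoref{lem: combinatorial.0} to \emph{those}. This is what separates type~$\tilde{E}_7$ from type~$\tilde{E}_7^{(2)}$ (whether an auxiliary $\mathrm{I}_2$ or $\mathrm{III}$ fiber is double or simple) and isolates type~$\mathrm{I}$ (an $\mathrm{I}_8$ half-fiber forces $p\ne 2$ or $X$ ordinary by \autoref{lem: genus.1.fibrations.on.Enriques}, after which an external classification result is invoked).
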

\begin{proof}
First, note that it suffices to show that \(X\) contains a configuration of \((-2)\)-curves equal to the dual graph of one of the five types of surfaces in the theorem. Indeed, by \cite[Theorem~2.3]{Vinberg} and \cite[Remark~2.15]{Katsura.Kondo.Martin}, the group generated by reflections along the \((-2)\)-curves in the dual graph of one of these surfaces has finite index in the orthogonal group of \(\Num(X)\). By \cite[Proposition~6.9]{Namikawa}, this implies that the \((-2)\)-curves in the graph are in fact all the \((-2)\)-curves on a surface containing such a configuration.

Assume first that the fiber of type~\(\II^*\) is a half-fiber. 
Its special bisection must meet the only component of the half-fiber of multiplicity \(1\), giving rise to the dual graph of type~\(\tilde{E}_8\).

Assume instead that the fiber of type~\(\II^*\) is simple. Since this fiber has only one simple component, the special bisection \(R\) intersects transversely one of the two components of multiplicity \(2\) or it intersects doubly the component of multiplicity \(1\). 
If \(R\) intersects the component of multiplicity \(2\) next to the simple component, then we obtain the dual graph of type~\(\tilde{D}_8\). 
If \(R\) intersects the other component of multiplicity \(2\), then we obtain a new half-fiber of type~\(\mathrm{III}^*\):
\[
\begin{tikzpicture}
    \node (R4) at (0,0) [nodal] {};
    \node (R5) at (0,1) [nodal] {};
    \node (R6) at (1,0) [nodal] {};
    \node (R7) at (2,0) [nodal] {};
    \node (R8) at (3,0) [nodal] {};
    \node (R9) at (4,0) [nodal] {};
    \node[label=above:\(C\)] (RX) at (5,0) [nodal] {};
    \node (R3) at (-1,0) [nodal] {};
    \node (R2) at (-2,0) [nodal] {};
    \node[label=above:\(R\)] (R1) at (-3,0) [nodal] {};
    \draw (R1)--(R2)--(R3)--(R4) (R5)--(R4)--(RX);
    \end{tikzpicture}
    \]
The component \(C\) is orthogonal to the fiber of type~\(\mathrm{III}^*\), hence it belongs to another reducible fiber~\(G\) of this new fibration. By \autoref{lem: combinatorial.0}, \(G\) must be of type~\(\I_2\) or \(\III\). If \(G\) is a half-fiber, then we obtain the dual graph of type~\(\tilde{E}_7\). 
If instead it is a simple fiber, we obtain the dual graph of type~\(\tilde{E}_7^{(2)}\). 
Finally, if \(R\) intersects doubly the simple component of the fiber of type~\(\II^*\), then we obtain a new half-fiber \(G\) of type~\(\I_2\) or \(\III\). The seven components orthogonal to \(G\) are contained in a fiber of type~\(\mathrm{III}^*\) by \autoref{lem: combinatorial.0}, which can be a half-fiber or a simple fiber. 
In the former case, we obtain the dual graph of type~\(\tilde{E}_7\). 
In the latter, the bisection of the \(\mathrm{III}^*\) fiber also intersects the other simple component:
\[
\begin{tikzpicture}
    \node (R4) at (0,0) [nodal] {};
    \node (R5) at (0,1) [nodal] {};
    \node (R6) at (1,0) [nodal] {};
    \node (R7) at (2,0) [nodal] {};
    \node (R8) at (3,0) [nodal] {};
    \node (R9) at (4,0) [nodal] {};
    \node (RX) at (5,0) [nodal] {};
    \node (R3) at (-1,0) [nodal] {};
    \node (R2) at (-2,0) [nodal] {};
    \node (R1) at (-3,0) [nodal] {};
    \node (R11) at (6,0) [nodal] {};
    \draw (R1)--(R2)--(R3)--(R4) (R5)--(R4)--(RX);
    \draw[double] (RX)--(R11);
    \draw (R9) to[bend left] (R1);
\end{tikzpicture}
\]
There is a half-fiber of type~\(\I_8\), hence, by \autoref{lem: genus.1.fibrations.on.Enriques}, either \(p \neq 2\) or \(X\) is ordinary. Thus, it follows from \cite[Theorem 3.1]{Martin} that \(X\) is of type~\(\I\).
\end{proof}

\begin{corollary} \label{cor: II^* fiber.consequences}
If \(X\) is an Enriques surfaces with a special genus one fibration with a fiber of type~\(\II^*\), then \(X\) satisfies the following properties:
\begin{enumerate}
    \item \label{claim: cor_1} \(X\) contains finitely many \((-2)\)-curves,
    \item \label{claim: cor_2} \(X\) has finite automorphism group,
    \item \label{claim: cor_3} if \(p \neq 2\) or \(X\) is ordinary, then \(X\) is of type~\(\I\),
    \item \label{claim: cor_4} \(X\) is not extra-special if and only if every \(c\)-sequence with \(c \le 2\) extends to a \(3\)-sequence.
\end{enumerate}
\end{corollary}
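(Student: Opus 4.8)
The plan is to run each of the four assertions through the classification of \autoref{thm: II^* fiber}. That theorem forces \(X\) to be one of exactly five surfaces: of type~\(\I\), of type~\(\tilde{E}_7^{(2)}\), or extra-special of type~\(\tilde{E}_8\), \(\tilde{D}_8\) or \(\tilde{E}_7\); each is described by a finite and completely explicit dual graph of \((-2)\)-curves, and (by the application of \cite{Namikawa} inside the proof of \autoref{thm: II^* fiber}) these graphs already contain \emph{all} \((-2)\)-curves of \(X\). Assertion~(1) is therefore immediate. Assertion~(2) follows because the very references entering the proof of \autoref{thm: II^* fiber}, namely \cite{Vinberg} and \cite[Remark~2.15]{Katsura.Kondo.Martin}, show that \(W_X^{\nod}\) has finite index in the orthogonal group of \(\Num(X)\); for Enriques surfaces this finiteness of index implies that \(\mathrm{Aut}(X)\) is finite (see \cite{DolgachevKondoBook}). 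Equivalently, all five types occur in the classification of Enriques surfaces with finite automorphism group.

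For assertion~(3), first observe that by \autoref{rmk: extra-special} extra-special surfaces live only in characteristic~\(2\) and are classical or supersingular, hence never ordinary; so under either hypothesis of~(3) the surface is not extra-special. To discard type~\(\tilde{E}_7^{(2)}\), recall that its dual graph carries a half-fiber of type~\(\mathrm{III}^*\) (constructed in the corresponding branch of the proof of \autoref{thm: II^* fiber}), which is of additive type. By \autoref{lem: genus.1.fibrations.on.Enriques} an additive half-fiber can only occur when \(p=2\) and \(X\) is classical or supersingular, so type~\(\tilde{E}_7^{(2)}\) is likewise excluded and only type~\(\I\) remains.

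Assertion~(4) is an equivalence, and I would treat its two implications separately. The reverse implication is the contrapositive of the fact recorded before \autoref{rmk: extra-special} and due to \cite[Proposition~6.2.7]{DolgachevKondoBook}: every extra-special surface carries a non-extendable \(c\)-sequence with \(c\le 2\) (a \(1\)-sequence on type~\(\tilde{E}_8\), whose unique genus one fibration prevents any \(2\)-sequence, and a non-extendable \(2\)-sequence on types~\(\tilde{D}_8\) and \(\tilde{E}_7\)). For the forward implication, assume \(X\) is not extra-special; then by \autoref{thm: II^* fiber} it is of type~\(\I\) or \(\tilde{E}_7^{(2)}\). The case \(c=1\) reduces to \(c=2\): since these types are not extra-special of type~\(\tilde{E}_8\), \autoref{thm: non-degeneracy1} extends any \(1\)-sequence to a \(2\)-sequence, which it then suffices to extend further. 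For \(c=2\), suppose a \(2\)-sequence \((F_1,F_2)\) did not extend; by \autoref{thm: directproof} one of \(|2F_1|,|2F_2|\) would be a special fibration with a simple fiber of type~\(\II^*\), and the analysis in \autoref{claim:1} and the proof of \autoref{thm: directproof} would put the components of the reducible half-fiber inside that \(\II^*\) fiber. The goal is then to exhibit, inside the explicit dual graph, a half-fiber \(F_3\) with \(F_3.F_1=F_3.F_2=1\), contradicting non-extendability.

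I expect this last step to be the main obstacle. Concretely, one enumerates the genus one fibrations of types~\(\I\) and \(\tilde{E}_7^{(2)}\) (each pinned down by its reducible fibers within the finite \((-2)\)-curve graph), lists all half-fibers, and verifies that every pair forming a \(2\)-sequence admits a common completing half-fiber; this can be done either by checking the hypothesis of \autoref{prop: geometricargument}, i.e.\ producing two \emph{simple} fibers of \(|2F_1|\) and \(|2F_2|\) with a shared component, or by directly selecting \(F_3\) among the half-fibers produced in the relevant branches of the proof of \autoref{thm: II^* fiber}. Everything here is finite and explicit, so the difficulty is not conceptual but organizational: one must ensure that no \(2\)-sequence is missed and that the chosen \(F_3\) meets both \(F_1\) and \(F_2\) simply.
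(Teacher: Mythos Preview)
Your proposal is correct and follows essentially the same approach as the paper: both deduce (1) and (2) from the finite-index statement embedded in the proof of \autoref{thm: II^* fiber}, handle (3) by noting that every non-type-\(\I\) case carries an additive half-fiber, and reduce (4) to a finite enumeration of genus one fibrations on the five surfaces. The only difference is that for (4) the paper dispatches the enumeration by direct citation (\cite[Propositions~6.2.5 and 8.9.6]{DolgachevKondoBook}) rather than re-invoking \autoref{thm: directproof} to constrain which fibration carries the \(\II^*\) fiber; your detour is harmless but unnecessary.
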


\begin{proof}
Claim \eqref{claim: cor_1} follows from the classification in \autoref{thm: II^* fiber}, since all the surfaces listed there contain only finitely many \((-2)\)-curves.

By the first paragraph of the proof of \autoref{thm: II^* fiber}, the Weyl group \(W_X^{\nod}\) has finite index in the orthogonal group of \(\Num(X)\). Since \(\Aut(X)\) acts with finite kernel on \(\Num(X)/W_X^{\nod}\) by \cite[Theorem]{DolgachevMartin}, this implies Claim~\eqref{claim: cor_2} (see also \cite[Corollary 3.4]{Dolgachev1984}). 

For Claim~\eqref{claim: cor_3}, by \autoref{lem: genus.1.fibrations.on.Enriques} it suffices to observe that if \(X\) is not of type~\(\I\), then there exists an additive half-fiber on \(X\).

Finally, for Claim~\eqref{claim: cor_4}, one can simply list all genus one fibrations on \(X\). For the surfaces of type~\(\I\), we refer to \cite[Proposition~8.9.6]{DolgachevKondoBook}. For the extra-special surfaces and for the surfaces of type~\(\tilde{E}_7^{(2)}\), we refer to \cite[Proposition~6.2.5]{DolgachevKondoBook}.
\end{proof}

\begin{remark}
Enriques surfaces of type~\(\I\) and of type~\(\tilde{E}_7^{(2)}\) also display a special behaviour with respect to \(c\)-sequences: on both types of surfaces, there exist \(3\)-sequences that cannot be extended to \(4\)-sequences. In fact, there are no \(4\)-sequences at all on type~\(\tilde{E}_7^{(2)}\) \cite[Proposition~6.2.5]{DolgachevKondoBook}.
\end{remark}

\bibliographystyle{amsplain}
\bibliography{Enriques}

\providecommand{\bysame}{\leavevmode\hbox to3em{\hrulefill}\thinspace}
\providecommand{\MR}{\relax\ifhmode\unskip\space\fi MR }
\providecommand{\MRhref}[2]{%
  \href{http://www.ams.org/mathscinet-getitem?mr=#1}{#2}
}
\providecommand{\href}[2]{#2}
\begin{thebibliography}{10}

\bibitem{Bombieri.Mumford:III}
Enrico Bombieri and David Mumford, \emph{Enriques' classification of surfaces
  in char. {$p$}. {III}}, Invent. Math. \textbf{35} (1976), 197--232.
  \MR{491720}

\bibitem{Cossec:Picard_group}
Fran\c{c}ois Cossec, \emph{On the {P}icard group of {E}nriques surfaces}, Math.
  Ann. \textbf{271} (1985), no.~4, 577--600. \MR{790116}

\bibitem{Cossec.Dolgachev}
Fran\c{c}ois Cossec and Igor Dolgachev, \emph{\rm{Enriques surfaces. {I}}},
  Progress in Mathematics, vol.~76, Birkh\"auser Boston, Inc., Boston, MA,
  1989. \MR{986969}

\bibitem{CossecDolgachevLiedtke}
Fran\c{c}ois Cossec, Igor Dolgachev, and Christian Liedtke, \emph{Enriques
  surfaces {I}}, with an appendix by Shigeyuki Kond\={o}, draft available on
  \url{www.math.lsa.umich.edu/~idolga/EnriquesOne.pdf}, 2021.

\bibitem{Cossec:Projective_models}
Fran\c{c}ois~R. Cossec, \emph{Projective models of {E}nriques surfaces}, Math.
  Ann. \textbf{265} (1983), no.~3, 283--334. \MR{721398}

\bibitem{Dolgachev1984}
Igor Dolgachev, \emph{On automorphisms of {E}nriques surfaces}, Invent. Math.
  \textbf{76} (1984), no.~1, 163--177. \MR{739632}

\bibitem{DolgachevKondoBook}
Igor Dolgachev and Shigeyuki Kond\=o, \emph{Enriques surfaces {II}}, draft
  available on \url{www.math.lsa.umich.edu/~idolga/EnriquesTwo.pdf}, 2021.

\bibitem{DolgachevMartin}
Igor Dolgachev and Gebhard Martin, \emph{Numerically trivial automorphisms of
  {E}nriques surfaces in characteristic 2}, J. Math. Soc. Japan \textbf{71}
  (2019), no.~4, 1181--1200. \MR{4023303}

\bibitem{Katsura.Kondo.Martin}
Toshiyuki Katsura, Shigeyuki Kond\={o}, and Gebhard Martin,
  \emph{Classification of {E}nriques surfaces with finite automorphism group in
  characteristic 2}, Algebr. Geom. \textbf{7} (2020), no.~4, 390--459.
  \MR{4156410}

\bibitem{Martin}
Gebhard Martin, \emph{Enriques surfaces with finite automorphism group in
  positive characteristic}, Algebr. Geom. \textbf{6} (2019), no.~5, 592--649.
  \MR{4009175}

\bibitem{Namikawa}
Yukihiko Namikawa, \emph{Periods of {E}nriques surfaces}, Math. Ann.
  \textbf{270} (1985), no.~2, 201--222. \MR{771979}

\bibitem{Salomonsson}
Pelle Salomonsson, \emph{Equations for some very special {E}nriques surfaces in
  characteristic two}, preprint,
  \href{https://arxiv.org/abs/math/0309210}{arXiv:math/0309210}, 2003.

\bibitem{Vinberg}
\`Ernest~B. Vinberg, \emph{Some arithmetical discrete groups in {L}oba\v
  cevski\u\i \ spaces}, Discrete subgroups of {L}ie groups and applications to
  moduli ({I}nternat. {C}olloq., {B}ombay, 1973), Oxford Univ. Press, Bombay,
  1975, pp.~323--348. \MR{0422505}

\end{thebibliography}

\end{document}